\theoremstyle{plain}
\newtheorem{theo}{Theorem}[section]
\newtheorem{prop}[theo]{Proposition}
\newtheorem{lem}[theo]{Lemma}
\newtheorem{cor}[theo]{Corollary}
\newtheorem{condition}{Condition}
\theoremstyle{remark}
\newtheorem{rem}[theo]{Remark}
\newtheorem{ex}[theo]{Example}
\newtheorem*{acknowledgement}{Acknowledgement}
\newcommand{\Z}{\mathbb{Z}}
\newcommand{\R}{\mathbb{R}}
\newcommand{\C}{\mathbb{C}}
\newcommand{\CP}{\mathbb{CP}}
\newcommand{\Area}{\mathrm{Area}}
\newcommand{\Hyp}{\mathrm{Hyp}}
\newcommand{\Half}{\mathrm{Half}}
\newcommand{\sign}{\mathrm{sign}}
\newcommand{\Harm}{{\mathscr H}}
\newcommand{\Met}{\mathrm{Met}}
\newcommand{\Int}{\mathrm{Int}}
\newcommand{\Si}{\Sigma}
\newcommand{\SW}{Seiberg-Witten }
\newcommand{\config}{{\mathcal C}}
\newcommand{\rconfig}{{\mathcal C}_{\mathbb{R}}}
\newcommand{\cconfig}{{\mathcal C}_{\mathbb{C}}}
\newcommand{\Ker}{\mathop{\mathrm{Ker}}\nolimits}
\newcommand{\im}{\mathop{\mathrm{Im}}\nolimits}
\newcommand{\vnU}{{\mathcal U}}
\newcommand{\id}{\mathrm{id}}
\newcommand{\Hom}{\mathop{\mathrm{Hom}}\nolimits}
\newcommand{\ind}{\mathop{\mathrm{ind}}\nolimits}
\title[Bounds on genus and configurations of embedded surfaces]{Bounds on genus and configurations of embedded surfaces in 4-manifolds}
\author{Hokuto Konno}
\address{Graduate School of Mathematical Sciences, the University of Tokyo, 3-8-1 Komaba, Meguro, Tokyo 153-8914, Japan}
\email{hkonno@ms.u-tokyo.ac.jp}
\date{}
\begin{document}

\maketitle

\begin{abstract}
For several embedded surfaces with zero self-intersection number in $4$-manifolds, we show that an adjunction-type genus bound holds for at least one of the surfaces under certain conditions.
For example, we derive certain adjunction inequalities for surfaces embedded in $m\CP^2\# n(-\CP^2)\ (m, n \geq 2)$.
The proofs of these results are given by studying a family of \SW equations.
\end{abstract}

\tableofcontents

\section{Introduction}

It is a fundamental problem in $4$-dimensional topology to find a lower bound for the genus of an embedded surface which represents a given second homology class of a $4$-dimensional manifold.
In this paper, we consider a configuration consisting of several surfaces embedded in an oriented closed $4$-manifold.
Suppose that the self-intersection numbers of all the surfaces are zero, and the number of the surfaces is more than $b^+$.
The main theorem of this paper is that an adjunction-type genus bound holds for at least one of the surfaces under certain conditions on the surfaces.
For example, although the \SW invariant of $m\CP^2\# n(-\CP^2)\ (m,n \geq 2)$ vanishes for any spin c structure on it,  we can derive the adjunction inequalities for surfaces embedded in this $4$-manifold under certain conditions.
In addition, we also give an alternative proof of the adjunction inequalities by Strle~\cite{MR2064429} for configurations of surfaces with positive self-intersection numbers.
The proofs of these results are given by studying a $b^+$-parameter family of \SW equations obtained by stretching neighborhoods of several embedded surfaces.

The original problem to find a lower bound on genus of a single embedded surface is called the minimal genus problem.
(For the detailed history of the minimal genus problem see Lawson's survey~\cite{MR1486407}.)
Classical results for the minimal genus problem were obtained by using Rochlin's theorem and $G$-signature theorem.
(Kervaire-Milnor~\cite{MR0133134}, Rochlin~\cite{MR0298684}, Hsiang-Szczarba~\cite{MR0339239}.)
After gauge theory appeared in $4$-dimensional topology, it has been a strong tool to study the problem.
Kronheimer-Mrowka~\cite{MR1306022} proved the Thom conjecture, namely, the minimal genus problem for $\CP^2$ by using the \SW equations.
Kronheimer-Mrowka's lower bound on genus is an equality for algebraic curves, where it is referred to as the adjunction formula.
This type of inequality is often called the adjunction inequality.
In Kronheimer-Mrowka's proof, they used in an essential way the fact that the $4$-manifold defined by blow-up for $\CP^2$ has a metric with positive scalar curvature.

On the other hand, Strle~\cite{MR2064429} showed the adjunction inequality for a surface with positive self-intersection number in a $4$-manifold with $b^+=1$ and $b_1=0$ without any assumption for differential geometric structure such as the existence of a metric with positive scalar curvature.
For this reason, the Thom conjecture for a general rational homology $\CP^2$ follows as a special case of Strle's result.
Strle's method is to consider the moduli space of the \SW equations on a $4$-manifold with cylindrical ends.
Recently, Dai-Ho-Li~\cite{MR3465838} derived an alternative simple proof of Strle's theorem in the case of $b^+=1$ and $b_1=0$ and sharper results in the case of $b^+=1$ and $b_1>0$.
Dai-Ho-Li's method is to use the wall crossing formula for the \SW invariants by considering several spin c structures for one surface.

In Strle's paper~\cite{MR2064429}, he also showed the following adjunction inequalities for disjoint embedded surfaces with positive self-intersection numbers.
Let us consider a $4$-manifold with $b^+>1$ and embedded surfaces with positive self-intersection numbers.
Suppose that the surfaces are disjoint and the number of surfaces is $b^+$.
Strle showed that the adjunction-type inequality holds for at least one of these surfaces.
In particular, for $m \geq 2$, Strle's results can be applied to $m\CP^2 = \#_{p=1}^m\CP^2_p$ and its blow-up.
For $m \geq 2$ and $n \geq 0$, note that the \SW invariant of $m\CP^2\# n(-\CP^2)$ vanishes for any spin c structure on it.
Thus it is impossible to show the adjunction inequality on $c$ by proving that the \SW invariant with respect to the spin c structure corresponding to $c$ is non-trivial.
In fact, Nouh~\cite{MR3159966} showed that the adjunction inequality for a single surface in $2\CP^2$ does not hold in general.
In addition, by taking connected sum of Nouh's surfaces and algebraic curves, one can easily construct surfaces in $m\CP^2 \# n(-\CP^2)$ which violate the adjunction inequality.
To our knowledge, the adjunction-type inequalities for $m\CP^2\# n(-\CP^2)$ are only Strle's in previous researches.
(Before Strle's work, Gilmer~\cite{MR603768} have considered configurations of embedded surfaces, although Gilmer's bounds are not adjunction-type.
For a single surface, Gilmer's bound is the one obtained by Rochlin~\cite{MR0298684} and Hsiang-Szczarba~\cite{MR0339239}.)

In this paper, we also consider several embedded surfaces without any assumption for differential geometric structure.
The main theorem of this paper is as follows.
Suppose that the self-intersection numbers of all the given embedded surfaces are zero, and the number of the surfaces is more than $b^+$.
Then an adjunction-type  genus bound holds for at least one of the surfaces under certain conditions on the surfaces.
The conditions are given in terms of intersection numbers with a characteristic in the adjunction inequalities and their mutual geometric intersections.
These conditions are easily described for small $b^+$, so in this paper we will first present the adjunction inequalities for surfaces  in $2\CP^2\# n(-\CP^2)$ as a special case of the main theorem.
For surfaces in $2\CP^2\# n(-\CP^2)$, Strle's bound is the adjunction inequality for at least one of two disjoint surfaces with positive self intersection numbers.
On the other hand,  our genus bound is the adjunction inequality for at least one of four surfaces with self-intersection number zero and we allow certain geometric intersections.
A generalization of this result to $m\CP^2\# n(-\CP^2)$ for any $m \geq 2$ will be also given (Corollary~\ref{cor : adj. ineq for a single surface : mCP^2}).

We will also give an alternative proof  of the adjunction inequalities by Strle for configurations of surfaces with positive self-intersection numbers as a corollary of our main theorem.
In the case of $b^+ = 1$, the setting of our argument is quite similar to that of Dai-Ho-Li: surfaces with zero self-intersection number play a key role as in the original argument due to Kronheimer-Mrowka.
One difference between our method and Dai-Ho-Li's is that we fix a spin c structure and consider two surfaces while Dai-Ho-Li fix one surface and consider two spin c structures.
In the alternative proof of Strle's adjunction inequalities in the case of $b^+ = 1$, our argument can be regarded as a reformulation, without using blow-up formula, of a part of Dai-Ho-Li's argument.

For surfaces with self-intersection number zero, if the surfaces are obtained by the connected sum of surfaces with positive self-intersection numbers and exceptional curves in $-\CP^2$'s, our genus bounds are equivalent to Strle's.
However, for general surfaces, our results are wider generalizations of Strle's, since, of course, surfaces with self-intersection number zero cannot written as such a form of connected sum in general.

The wall crossing phenomena for general $b^+ \geq 1$ play prominent roles in the proof of the main theorem.
 We can grasp the wall crossing phenomena by studying the moduli space of the \SW equations parametrized by a $b^+$-dimensional space in the space of Riemannian metrics.
In other words, we consider a family version of the \SW invariants studied by Li-Liu~\cite{MR1868921}.
We will construct this $b^+$-parameter family of Riemannian metrics by stretching the neighborhoods of surfaces embedded in a suitable configuration.
This construction of the family of Riemannian metrics is a slight generalization of one due to Fr{\o}yshov~\cite{MR2052970} used in the context of instanton Floer homology.
We will also describe positions of metrics in relation to the wall and study a condition to grasp the wall crossing phenomena in terms of several embedded surfaces.
This ``higher-dimensional" wall crossing argument using several embedded surfaces enables us to obtain a solution of the \SW equations with respect to a certain metric even when the \SW invariants of the $4$-manifold vanish.

The outline of this paper is as follows.
In Section~2, we formulate our main theorem and give its consequences.
In Section~\ref{subsection : Special case of the main theorem}, we present a special case of the main theorem (Theorem~\ref{main thm : special}), namely, the adjunction inequalities for configurations of surfaces in $2\CP^2\# n(-\CP^2)$.
In this subsection, we also give the adjunction inequalities for a single surface as a corollary of Theorem~\ref{main thm : special} (Corollary~\ref{cor : adj. ineq for a single surface}).
In Section~\ref{subsection : General form of the main theorem}, we formulate the most general form of our main theorem (Theorem~\ref{general}) and give the adjunction inequalities for a single surface in $m\CP^2\# n(-\CP^2)$ (Corollary~\ref{cor : adj. ineq for a single surface : mCP^2}).
In this subsection, we also give an alternative proof of Strle's adjunction inequalities.
In Section~3, we prove the main theorem assuming analytical Lemma~\ref{stretch} and Proposition~\ref{analysis}.
In Section~4, we give the proof of Lemma~\ref{stretch} and Proposition~\ref{analysis}.

\begin{acknowledgement}
The author would like to express his deep gratitude to Mikio Furuta for the numerous helpful suggestions and continuous encouragements during this work. 
It is also his pleasure to thank Shinichiroh Matsuo for several useful advice and instructing him in a way of making a figure used in this paper.
The author also wishes to thank Tirasan Khandhawit for useful conversations and his comments on the preprint version. 
The author would also like to express his appreciation to Yuichi Yamada for pointing out a mistake in the preprint version. 
The author also thanks Tomohiro Asano for stimulating discussions and also thanks Kouki Sato for answering his questions.
The author also gratefully acknowledge the many helpful suggestions of the anonymous referee.
The author was supported by JSPS KAKENHI Grant Number 16J05569 and
the Program for Leading Graduate Schools, MEXT, Japan.

\end{acknowledgement}

\section{Statement of the main theorem and its consequences}

In this section, we state our main theorem and give its consequences.

\subsection{Special case of the main theorem}
\label{subsection : Special case of the main theorem}

In this subsection, as a special case of our main theorem, we give the adjunction inequalities for embedded surfaces in the $4$-manifold $X = 2\CP^2 \# n(-\CP^2)$.
We will often use the identification $H^2(\cdot) \simeq H_2(\cdot)$ obtained by Poincar\'{e} duality.
In this paper, we consider only surfaces which are oriented, closed and connected.

Let us consider the 4-manifold
\[
X = 2\CP^2 \# n(-\CP^2) = (\#_{p=1}^2 \CP^2_p) \# (\#_{q=1}^n (-\CP^2_{q}))\quad (n > 0).
\]
Let $H_p$ denote a generator of $H_2(\CP^2_p;\Z)$ and $E_q$ a generator of $H_2(-\CP^2_q;\Z)$.
For a cohomology class $c \in H^2(X;\Z)$ and homology classes $\alpha_1, \ldots, \alpha_4 \in H_2(X;\Z)$,
we define a line $L_i\ (i=1,\ldots,4)$ in $\R^2$ by 
\begin{align}
L_i := \Set{ (x_1, x_2) \in \R^2 | (x_1H_1+x_2H_2) \cdot \alpha_i = c \cdot \alpha_i }.
\label{line L_i}
\end{align}
For these lines, we will consider the condition that (parts of) lines $L_1, \ldots, L_4$ form sides of a ``quadrilateral" by this order.
Here we use the word ``quadrilateral" in the following sense.
Let  $L_1', \ldots, L_4'$ be four line segments in $\R^2$.
If an orientation of $L_i'$ is given, we can define the initial point $I(L_i')$ and the terminal point $T(L_i')$ of $L_i'$.
We call the ordered set $(L_1', \ldots, L_4')$ a {\it quadrilateral} when there exists an orientation for each $L_i'$ such that $T(L_i') = I(L_{i+1}')$ holds for each $i \in \Z/4$.
(We admit a point as a line segment.
Thus a ``triangle" is also a quadrilateral in our definition.)

\begin{figure}
\begin{center}
\begin{tikzpicture}
 [scale = 1/2]
\draw(6.7,0) node {$x_1$};
\draw(0,3.6) node {$x_2$};
\draw[->] (-3.3,0) -- (6.3, 0);
\draw[->] (0,-6.3)-- (0,3.3);
\draw (4,-6)-- (1/3,3);
\draw (3,5/4)-- (-3,-1/4);
\draw (6,-14/3)-- (-3,4/3);
\draw (6,-10/3)-- (10/3,-6);
\draw(2.6,-1) node {$L_1$};
\draw(-0.4,0.9) node {$L_2$};
\draw(0.6,-1.5) node {$L_3$};
\draw(5,-5.2) node {$L_4$};
\fill(0,0) circle (4pt);
\label{a}
\end{tikzpicture}
\end{center}
\caption{An example of a quadrilateral including the origin of $\R^2$}
\end{figure}
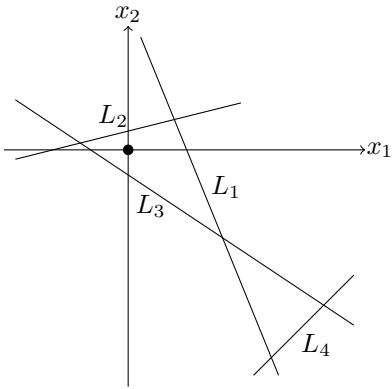

 A special case of our main theorem is as follows.
 Let $\sign(X) = 2-n$ denote the signature of $X$.

\begin{theo}
For the $4$-manifold
\[
X = 2\CP^2 \# n(-\CP^2)\quad (n > 0),
\]
let $c \in H^2(X;\Z)$ be a characteristic with $c^2 > \sign(X)$ and $\alpha_1, \ldots, \alpha_4 \in H_2(X;\Z)$ be homology classes with $\alpha_i^2=0\ (i = 1,\ldots, 4)$.
Let $\Si_1, \ldots, \Si_4 \subset X$ be embedded surfaces with $[\Si_i] = \alpha_i$.
Assume that $\alpha_i$ and $c$ satisfy the following (A) and $\Si_i$ satisfy (B) :
\begin{description}
\item[(A)] The lines $L_1, \ldots, L_4$ form sides of a quadrilateral including the origin of $\R^2$ by this order.
\item[(B)] $\Si_i \cap \Si_{i+1} = \emptyset\ (i \in \Z/4).$
\end{description}
Then, the inequality
\begin{align*}
-\chi(\Si_i) \geq |c \cdot \alpha_i|
\end{align*}
holds for at least one $i \in \{1,\ldots, 4\}$.
Here $\chi(\Si_i)$ is the Euler characteristic of $\Si_i$.
\label{main thm : special}
\end{theo}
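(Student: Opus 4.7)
The plan is to argue by contradiction: suppose $-\chi(\Si_i) < |c \cdot \alpha_i|$ for every $i \in \{1,\ldots,4\}$. Since $b^+(X) = 2$ for $X = 2\CP^2 \# n(-\CP^2)$, the idea is to build a $2$-parameter family of Riemannian metrics on $X$, study the associated family of \SW equations with $c_1 = c$, and combine a vanishing result at large parameters with a wall-crossing non-vanishing statement to reach a contradiction.

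For the construction of the family, condition~(B) together with Lemma~\ref{stretch} allows me to simultaneously stretch cylindrical neighborhoods of $\Si_1,\ldots,\Si_4$ (each has a trivial normal bundle since $\alpha_i^2 = 0$); the four stretching lengths are organized into a $2$-parameter family $\{g_{(x_1,x_2)}\}_{(x_1,x_2) \in \R^2}$ in the spirit of Fr{\o}yshov~\cite{MR2052970}. The construction is arranged so that, up to bounded error, the self-dual harmonic period class in $H^2(X;\R)$ associated with the spin c structure of $c_1 = c$ is controlled by $x_1 H_1 + x_2 H_2$; under this identification the line $L_i$ of~(\ref{line L_i}) is precisely the locus where $\omega_{g_{(x_1,x_2)}}^+ \cdot \alpha_i = c \cdot \alpha_i$, i.e.\ the wall attached to the long neck on $\Si_i$.

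For vanishing at large parameters, Proposition~\ref{analysis} says that if $(x_1,x_2)$ lies on the far side of $L_i$ from the origin and the stretching along $\Si_i$ is taken sufficiently long, then the assumed failure $-\chi(\Si_i) < |c \cdot \alpha_i|$ forces the moduli space of \SW solutions on $(X, g_{(x_1,x_2)})$ to be empty; morally this is a neck-stretching incarnation of the classical adjunction obstruction. Condition~(A) asserts that $L_1,\ldots,L_4$ enclose the origin as a quadrilateral, so the exterior of the quadrilateral is the union of four such ``far sides'' and the parametrized moduli space is empty outside a bounded region of $\R^2$. On the other hand, the very same quadrilateral condition forces the period map $(x_1, x_2) \mapsto \omega_{g_{(x_1,x_2)}}^+$ to link the reducible wall with non-zero degree, and together with $c^2 > \sign(X)$ (which secures the dimension count for the family \SW invariant of Li-Liu~\cite{MR1868921}) this higher-dimensional wall-crossing produces a solution for some interior parameter, contradicting the emptiness just established.

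The main obstacle will be converting the combinatorial input of condition~(A) into a precise non-vanishing statement for the family invariant. As emphasized in the introduction, the ordinary \SW invariant of $X$ is zero, so the argument must take place strictly inside a $b^+$-parameter family without recourse to a blow-up formula; this requires matching the four geometric walls $L_i$ with the four analytic stretching regimes controlled by Proposition~\ref{analysis}, and showing that the four ``far-side'' emptiness regions really do cover the complement of the quadrilateral in $\R^2$. A secondary point to handle carefully is the passage from ``the period class is asymptotically $x_1 H_1 + x_2 H_2$'' to ``the walls are literally the lines $L_i$'': the bounded error introduced by the stretching has to be uniform enough that it does not corrupt the linking degree around the origin.
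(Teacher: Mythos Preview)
Your overall strategy—build a $b^+$-parameter family of metrics by neck-stretching, use the degree of the period map to force a solution, and combine with Proposition~\ref{analysis}—is exactly the paper's approach. However, the proof as written has a genuine gap in how the contradiction is located, stemming from a conflation of two different copies of $\R^2$.

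The lines $L_i$ live in the \emph{target} $(V^+)^\ast \simeq \R^2$ of the period map $\mathcal{F}:\mathcal{P}\to (V^+)^\ast$, not in the parameter space $\mathcal{P}$ itself. Your $(x_1,x_2)$ is being used simultaneously as a stretching parameter and as a period, and these are not the same thing: the paper never arranges for the period to literally equal $x_1H_1+x_2H_2$. Moreover, condition~(B) only gives $\Si_i\cap\Si_{i+1}=\emptyset$, so you cannot ``simultaneously stretch cylindrical neighborhoods of $\Si_1,\ldots,\Si_4$''; at any point of $\mathcal{P}$ at most two adjacent surfaces are being stretched. The space $\mathcal{P}$ is the cone on the cycle graph $C_4$, which happens to be homeomorphic to $\R^2$, but the four stretching directions are not independent coordinates.

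The more serious error is the sentence ``wall-crossing produces a solution for some interior parameter, contradicting the emptiness just established.'' A solution in the interior would not contradict emptiness outside a bounded region. What the wall-crossing argument (Lemma~\ref{virtual nbd}(2) and Proposition~\ref{existence}) actually yields is a solution on the \emph{boundary} $\partial(\iota(\mathcal{P}(\vec{R}_0)))$ of a large disk in $\mathcal{P}$, i.e.\ at a highly stretched metric. The contradiction is that both halves land at the same place: the contrapositive of Proposition~\ref{analysis} (under your assumption that all four inequalities fail) says there is no solution at any sufficiently stretched metric, while the non-zero degree of $\mathcal{F}$ forces one to exist there. You have the two ingredients, but you have placed the forced solution on the wrong side of the large sphere, so the argument as written does not close.
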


Theorem~\ref{main thm : special} is a special case of Theorem~\ref{general} in the next subsection and the proof of Theorem~\ref{general} will be given in Section~\ref{section : Proof of the main theorem}.

\begin{ex}
Let $X = 2\CP^2 \# 19(-\CP^2),\ c = H_1 - 3H_2 -\sum_{q=1}^{19}E_q$.
The homology classes
\begin{align*}
\alpha_1 &:= 3H_1 + 3H_2  - \textstyle{\sum}_{q=1}^3 E_q + \textstyle{\sum}_{q=4}^{10} E_q + 2(E_{11} + E_{12}),\\
\alpha_2 &:= -3H_1 + 2H_2  + \textstyle{\sum}_{q=1}^3 E_q + \textstyle{\sum}_{q=13}^{18} E_q + 2E_{19}\\
\alpha_3 &:= H_1 + H_2 + E_{12} - E_{13},\\
\alpha_4 &:= 2H_1 - H_2 - \textstyle{\sum}_{q=1}^3 E_q - E_{13} + E_{14}
\end{align*}
satisfy that $\alpha_i^2 = 0$ and $\alpha_i \cdot \alpha_{i+1} = 0
\ (i \in \Z/4).$
It is easy to check that these $\alpha_i$ and $c$ satisfy (A) in Theorem~\ref{main thm : special}.
(See Figure~\ref{picture in Ex. main example}.)

Thus, by Theorem~\ref{main thm : special},  for embedded surfaces $\Si_i$ satisfying $[\Si_i] = \alpha_i$, if they also satisfy that $\Si_i \cap \Si_{i+1} = \emptyset\ (i \in \Z/4)$,
\[
-\chi(\Si_i) \geq |c \cdot \alpha_i|
\]
holds for at least one $i \in \{1,\ldots, 4\}$.
This means that the genus bound
\[
g(\Si_i) \geq 2
\]
holds for at least one $i \in \{1,\ldots, 4\}$.

\begin{figure}
\begin{center}
\begin{tikzpicture}
\draw(2.4,0) node {$x_1$};
\draw(0,2.2) node {$x_2$};
\draw[->] (-2.1,0) -- (2.1, 0);
\draw[->] (0,-2.7)-- (0,2);
\draw (-5/6,3/2)-- (3/2,-5/6);
\draw (1/3,3/2)-- (-3/2,-5/4);
\draw (-3/2,-1/2)-- (1/2,-5/2);
\draw (-1/4,-5/2)-- (3/2,1);
\draw(0.55,0.55) node {$L_1$};
\draw(-0.9,0.2) node {$L_2$};
\draw(-0.75,-1.55) node {$L_3$};
\draw(0.6,-1.6) node {$L_4$};
\fill(0,0) circle (2pt);
\end{tikzpicture}
\end{center}
\caption{$L_1, \ldots, L_4$ in Example~\ref{main example}}
\label{picture in Ex. main example}
\end{figure}
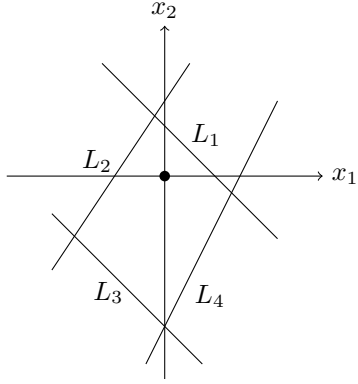

\label{main example}
\end{ex}

\begin{rem}
If $\alpha_1, \ldots, \alpha_4$ and $c$ satisfy (A) in Theorem~\ref{main thm : special}, for any non-zero integer $m$, the homology classes $m\alpha_1, \ldots, m\alpha_4$ and $c$ obviously satisfy (A).
Thus, for $\alpha_1, \ldots, \alpha_4$ in Example~\ref{main example} and embedded surfaces $\Si_i$ satisfying $[\Si_i] = m\alpha_i$ and
$\Si_i \cap \Si_{i+1} = \emptyset\ (i \in \Z/4)$, we can show that
\[
g(\Si_i) \geq |m|+1
\]
holds for at least one $i \in \{1,\ldots, 4\}$ by Theorem~\ref{main thm : special}.
\end{rem}

\begin{rem}
Note that the \SW invariant of $2\CP^2 \# n(-\CP^2)\ (n \geq 0)$ vanishes for any spin c structure.
Thus, for any characteristic $c$ on $2\CP^2 \# n(-\CP^2)$,  it is impossible to derive the adjunction inequality for $c$ by proving that the \SW invariant with respect to the spin c structure corresponding to $c$ is non-trivial.
In fact, for a single surface in $2\CP^2$, Nouh~\cite{MR3159966} showed that the adjunction inequality on $c = 3H_1+ 3H_2$ does not hold in general.
(In addition, by using blow-up, we can construct surfaces in $2\CP^2 \# n(-\CP^2)$ which violate the adjunction inequality on $c = 3H_1 + 3H_2 - \sum_{q=1}^n E_q$.)
To our knowledge, the adjunction-type inequalities for $2\CP^2 \# n(-\CP^2)$ is only Strle's~\cite{MR2064429} in previous researches.
For such $4$-manifold, Strle's genus bound is  the adjunction inequality for at least one of two disjoint surfaces with positive self-intersection numbers.
On the other hand, Theorem~\ref{main thm : special} is the adjunction inequality for at least one of four surfaces with self-intersection number zero and we allow certain geometric intersections.
In fact, any geometric intersections of $\Si_1$ and $\Si_3$, $\Si_2$ and $\Si_4$ are allowed in Theorem~\ref{main thm : special}.

The same remarks can be said for the general version of our main theorem (Theorem~\ref{general}) and its consequence (Corollary~\ref{cor : adj. ineq for a single surface : mCP^2}) which gives certain adjunction-type inequalities for $m\CP^2 \# n(-\CP^2)\ (m,n \geq 2)$.
In addition, we can give an alternative proof of Strle's result from the general version of our main theorem (Corollary~\ref{positive self-intersection}).
\end{rem}

Under certain assumptions on geometric intersections with embedded surfaces violating the adjunction inequalities, we can derive the adjunction inequality for a single surface.
(This will be generalized to the adjunction inequality for a single surface in $m\CP^2\#n\CP^2\ (m,n \geq 2)$ in Corollary~\ref{cor : adj. ineq for a single surface : mCP^2}).
Before starting to state it, we mention an easy method to make surfaces with small genera.
For a homology class $\beta = aH_2 + \sum_{q=1}^n b_q E_q \in H_2(\CP^2_2 \# n(-\CP^2);\Z)$,
considering algebraic curves $C \subset \CP^2_2$ and $C_q \subset \CP^2_q$ and reversing orientations of them if we need,
we can easily construct the surface $S \subset \CP^2_2 \# n(-\CP^2)$ by
\begin{align}
S := C \# (\#_{q=1}^n C_q)
\label{conn. sum construction}
\end{align}
satisfing
\[
[S] = \beta,\quad g(S) = \frac{(|a|-1)(|a|-2)}{2} + \sum_{q=1}^n \frac{(|b_q|-1)(|b_q|-2)}{2}.
\]
For example, for the characteristic $H_2-\sum_{q=1}^nE_q$, such naive construction is sufficient to give many examples of surfaces which violate the adjunction inequality.
In the following Corollary~\ref{cor : adj. ineq for a single surface}, we give the adjunction inequality for a single surface.
The above construction will produce many examples of surfaces satisfying the assumption in Corollary~\ref{cor : adj. ineq for a single surface}.

\begin{cor}
For the $4$-manifold
\[
X = 2\CP^2 \# n(-\CP^2)\quad (n>0),
\]
let $c \in H^2(X;\Z)$ be a characteristic with $c^2 > \sign(X)$ and $c \cdot H_1 > -3$.
Let $\alpha \in H_2(X;\Z)$ be a homology class with $\alpha^2 = 0$ and assume that
\[
(H_1 \cdot \alpha) (c \cdot \alpha) < 0.
\]
Let $\beta_i \in H_2(\CP^2_2 \# n(-\CP^2);\Z)\ (i = 1,2)$ be homology classes with $\beta_i^2 = 0$ and $S_i \subset \CP^2_2 \# n(-\CP^2) \setminus ({\rm disk}) \subset X$ be surfaces with $[S_i] = \beta_i$.
Assume that
\begin{align*}
H_2 \cdot \beta_i > 0,\quad (-1)^{i-1}c \cdot \beta_i > 0,\quad -\chi(S_i) < |c \cdot \beta_i| .
\end{align*}
Then, for an embedded surface $\Si \subset X$ satisfying $[\Si] = \alpha$ and $\Si \cap S_i = \emptyset\ (i = 1, 2)$, the inequality
\[
-\chi(\Si) \geq |c \cdot \alpha|
\]
holds.
\label{cor : adj. ineq for a single surface}
\end{cor}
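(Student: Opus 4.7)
The plan is to derive Corollary~\ref{cor : adj. ineq for a single surface} from Theorem~\ref{main thm : special} applied after a single blowup of $X$. The configuration of four surfaces in Theorem~\ref{main thm : special} will be $S_1$, $\Sigma$, $S_2$, together with one auxiliary sphere coming from the exceptional divisor of the blowup. Since $S_1$ and $S_2$ violate the adjunction inequality by hypothesis, and any embedded $2$-sphere of self-intersection zero violates it trivially (because $-\chi=-2$ while $|c'\cdot\alpha_4|$ is a non-negative integer), the theorem's conclusion must be carried by $\Sigma$.

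Concretely, first I would blow up $X$ at an interior point of $\CP^2_1\setminus({\rm disk})$ chosen well away from $S_1,S_2$, producing $\tilde X:=2\CP^2\#(n+1)(-\CP^2)$ with new exceptional class $E_{n+1}$. Setting $c':=c+E_{n+1}$ gives a characteristic on $\tilde X$ satisfying $(c')^2=c^2-1>\sign(X)-1=\sign(\tilde X)$. The proper transform $T$ of a line in $\CP^2_1$ through the blowup point is an embedded $2$-sphere of class $H_1-E_{n+1}$, self-intersection zero, lying entirely in $\CP^2_1\cup(-\CP^2_{n+1})$ and therefore automatically disjoint from $S_1$ and $S_2\subset \CP^2_2\#n(-\CP^2)\setminus({\rm disk})$.

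Next I would apply Theorem~\ref{main thm : special} to $\tilde X$ with classes $(\beta_1,\alpha,\beta_2,H_1-E_{n+1})$ and surfaces $(S_1,\Sigma,S_2,T)$. The disjointness condition~(B) follows from $\Sigma\cap S_i=\emptyset$ combined with the placement of $T$. For the quadrilateral condition~(A), the line $L_1$ is horizontal at $x_2=(c\cdot\beta_1)/(H_2\cdot\beta_1)>0$, the line $L_3$ is horizontal at $x_2=(c\cdot\beta_2)/(H_2\cdot\beta_2)<0$, $L_2=L_\alpha$ crosses the negative $x_1$-axis by $(H_1\cdot\alpha)(c\cdot\alpha)<0$, and $L_4$ is the vertical line $x_1=c'\cdot(H_1-E_{n+1})=c\cdot H_1+1\geq 0$ (using $c\cdot H_1>-3$ together with the characteristic parity $c\cdot H_1\equiv 1\pmod{2}$). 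Theorem~\ref{main thm : special} then forces $-\chi(\Sigma_i)\geq|c'\cdot\alpha_i|$ for at least one $i$; the cases $i=1,3$ are excluded by $-\chi(S_i)<|c\cdot\beta_i|$, and $i=4$ is impossible since $T$ is a sphere. The remaining case yields $-\chi(\Sigma)\geq|c'\cdot\alpha|=|c\cdot\alpha|$.

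The hard part will be verifying the quadrilateral condition~(A): although the intercepts place the four lines on the correct sides of the origin, checking that the resulting polygonal chain actually encircles the origin in the prescribed cyclic order (and is not self-intersecting) requires a small case analysis on the sign of $H_2\cdot\alpha$ and the slope of $L_\alpha$. The role of the constraint $c\cdot H_1>-3$ is precisely to ensure that $L_4$ lies on the correct side of the origin so that it can serve as the right-hand side of the quadrilateral enclosing it.
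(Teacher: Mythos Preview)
Your plan is the same as the paper's --- blow up, introduce an auxiliary surface in the $H_1$--direction that violates the adjunction inequality, and feed $(\Sigma,S_1,S_2,\text{auxiliary})$ into Theorem~\ref{main thm : special} --- but your single blowup is not quite enough. With $c'=c+E_{n+1}$ and $\alpha_4=H_1-E_{n+1}$ you get $c'\cdot\alpha_4=c\cdot H_1+1$, and the hypothesis $c\cdot H_1>-3$ together with parity only gives $c\cdot H_1\in\{-1,1,3,\dots\}$. In the boundary case $c\cdot H_1=-1$ your line $L_4$ is $\{x_1=0\}$, which passes through the origin, so condition~(A) fails and Theorem~\ref{main thm : special} does not apply (the general Theorem~\ref{general} explicitly requires $c\cdot\alpha_i\neq 0$). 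The paper avoids this by blowing up $m^2$ times with $m=|c\cdot H_1|+1$ and taking $\gamma=mH_1+\sum_{q=n+1}^{n+m^2}E_q$ with $c'=c-\sum E_q$; then $c'\cdot\gamma=m\,c\cdot H_1+m^2$, which is strictly positive for every odd $c\cdot H_1>-3$, and the degree--$m$ curve built as in~(\ref{conn. sum construction}) still has genus small enough to violate the adjunction inequality precisely because $c\cdot H_1>-3$. Your sphere trick is more elegant when it works, but the extra blowups are exactly what buys the case $c\cdot H_1=-1$.

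As for what you call the hard part: once $L_4$ is a vertical line with strictly positive $x_1$--intercept, condition~(A) is automatic and needs no case analysis on $H_2\cdot\alpha$. The closed polygonal chain crosses the $x_1$--axis exactly twice: once along $L_\alpha$ at the negative point $x_1=(c\cdot\alpha)/(H_1\cdot\alpha)<0$, and once along $L_4$ at $x_1>0$; hence its winding number about the origin is $\pm1$ regardless of the slope of $L_\alpha$ (which is never horizontal since $H_1\cdot\alpha\neq 0$). The paper handles this step by simply pointing to Figure~\ref{figure : Proof of Corollary}. Self--intersection of the chain is irrelevant: the paper's notion of ``quadrilateral including the origin'' is, via Remark~\ref{rem : main condition is a generalization of conditions in the first theorem}, precisely the degree condition.
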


\begin{proof}[Proof of Corollary~\ref{cor : adj. ineq for a single surface}]
Let $m := |c \cdot H_1| + 1$ and 
\[
X' := X \# m^2(-\CP^2) = X \# (\#_{q=n+1}^{n+m^2}(-\CP^2_q) ),\quad c' := c - \sum_{q=n+1}^{n+m^2}E_{q}.
\]
For the homology class $\gamma := mH_1 + \sum_{q=n+1}^{n+m^2}E_{q}$, we obtain
\begin{align*}
c' \cdot \gamma
= m c \cdot H_1 + m^2
= (|c \cdot H_1| + 1) c\cdot H_1 +(|c \cdot H_1| + 1)^2
> 0.
\end{align*}
Let $S \subset \CP^2_1 \# (\#_{q=n+1}^{n+m^2}(-\CP^2_{q})) \setminus ({\rm disk}) \subset X'$ be a surface with $[S] = \gamma$ obtained as (\ref{conn. sum construction}).
Note that $S \cap S_i = \emptyset\ (i=1,2)$ hold.
Since we have
\[
| c' \cdot \gamma | +2 = m c \cdot H_1 + m^2 + 2 > m^2 - 3m + 2 = 2g(S),
\]
the surface $S$ violates the adjunction inequality
\[
-\chi(S) \geq | c' \cdot \gamma |.
\]
Similarly, the surfaces $S_i\ (i=1,2)$ also violate the adjunction inequalities
\[
-\chi(S_i) \geq | c' \cdot \beta_i |
\]
by our assumptions.

Let regard
\begin{align*}
X',\quad c',\quad \alpha,\quad \beta_1,\quad  \gamma,\quad  \beta_2
\end{align*}
as 
\begin{align*}
X ,\quad c ,\quad \alpha_1 ,\quad \alpha_2 ,\quad \alpha_3,\quad \alpha_4 
\end{align*}
in Theorem~\ref{main thm : special}.
Then it is easy to see that they satisfy (A) in Theorem~\ref{main thm : special}.
(See Figure~\ref{figure : Proof of Corollary}.)
If $\Si \cap S_i = \emptyset\ (i=1, 2)$ hold, the surfaces $\Si, S_1, S$ and $S_2$ satisfy (B) in Theorem~\ref{main thm : special}.
Thus we can derive the conclusion by Theorem~\ref{main thm : special}.
\end{proof}

\begin{center}
\begin{figure}
\begin{tikzpicture}
\draw(2.6,0) node {$x_1$};
\draw(0,2.5) node {$x_2$};
\draw[->] (-2.3,0) -- (2.3, 0);
\draw[->] (0,-2.3)-- (0,2.3);
\draw (-2,-2)-- (1.3,2);
\draw (0.5,-2)-- (0.5,2);
\draw (-2, 1.6)-- (2, 1.6);
\draw (-2, -1.3)-- (2, -1.3);
\draw(-1.1,-0.4) node {$L_1$};
\draw(0.8,1.85) node {$L_2$};
\draw(0.8,-0.4) node {$L_3$};
\draw(-0.3,-1.6) node {$L_4$};
\fill(0,0) circle (2pt);
\end{tikzpicture}
\caption{Proof of Corollary~\ref{cor : adj. ineq for a single surface}}
\label{figure : Proof of Corollary}
\end{figure}
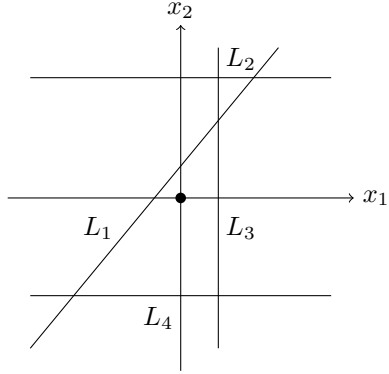
\end{center}

\begin{ex}
Let us give natural numbers $d_1 \geq 4,\ d_2 \geq 1,\ d_3 \geq 2$ and $n \geq d_1^2 + \max\{ d_2^2, d_3^2\}$.
For $X = 2\CP^2 \# n(-\CP^2)$,
let us consider the homology classes
\begin{align*}
\alpha &:= d_1H_1 - \sum_{q=1}^{d_1^2}E_q,\\
\beta_1 &:= d_2H_2 + \sum_{q=d^2_1+1}^{d_1^2+d_2^2} E_q,\\
\beta_2 &:= d_3H_2 - \sum_{q=d^2_1+1}^{d_1^2+d_3^2} E_q.
\end{align*}
Let $S_i \subset \CP^2_2 \# n(-\CP^2) \setminus ({\rm disk}) \subset X$  be surfaces with $[S_i] = \beta_i$ obtained as (\ref{conn. sum construction}).
For the characteristic $c = 3H_1 + H_2 -\sum_{q=1}^n E_q$,
it is easy to see that $\alpha,\ \beta_i$ and $c$ satisfy the assumptions of Corollary~\ref{cor : adj. ineq for a single surface}.
Thus, for an embedded surface $\Si \subset X$ satisfying $[\Si] = \alpha$ and $\Si \cap S_i = \emptyset\ (i=1,2)$, we have
\begin{align}
g(\Si) \geq \frac{(d_1-1)(d_1-2)}{2}
\label{ineq : example for a single surface like Thom conj.}
\end{align}
by Corollary~\ref{cor : adj. ineq for a single surface}.

By the adjunction formula for $\CP^2_1 \# n(-\CP^2)$, the homology class $\alpha$ can be represented by a surface $\Si$ of genus $(d_1-1)(d_1-2)/2$ satisfying $\Si \cap S_i = \emptyset$.
Thus the inequality (\ref{ineq : example for a single surface like Thom conj.}) is the optimal bound under the condition $\Si \cap S_i = \emptyset$.
\label{ex : example for a single surface like Thom conj.}
\end{ex}

\begin{ex}
Let give natural numbers $d_1 \geq1,\ d_2 \geq 0,\ d_3, d_4 \geq 2$ and $n$ satisfying
\begin{align*}
d_3 \geq d_4 \geq \max\{d_2,2\}  ,\ d_1^2+d_2^2 - 2d_2d_3 - 3d_1 - d_2 > 0,\\
n \geq d_1^2+d_2^2+d_3^2+d_4^2-d_2d_3-d_2d_4.
\end{align*}
For $X = 2\CP^2 \# n(-\CP^2)$,
let us consider the homology classes
\begin{align*}
\alpha &:= d_1H_1 + d_2H_2 + \sum_{q=1}^{d_2d_3}E_q - \sum_{q=d_3^2+1}^{ d_3^2+d_2d_4}E_q - \sum_{q = d_3^2+d_4^2+1}^{d_1^2+d_2^2+d_3^2+d_4^2-d_2d_3-d_2d_4}E_q,\\
\beta_1 &:= d_3H_2 + \sum_{q=1}^{d_3^2} E_q,\\
\beta_2 &:= d_4H_2 - \sum_{q=d^2_3+1}^{d_3^2 + d_4^2} E_q.
\end{align*}
Let $S_i \subset \CP^2_2 \# n(-\CP^2) \setminus ({\rm disk}) \subset X$  be surfaces with $[S_i] = \beta_i$ obtained as (\ref{conn. sum construction}).
For the characteristic $c = 3H_1 + H_2 -\sum_{q=1}^n E_q$,
it is easy to check that $\alpha,\ \beta_i$ and $c$ satisfy the assumptions of Corollary~\ref{cor : adj. ineq for a single surface}.
Thus, for an embedded surface $\Si \subset X$ satisfying $[\Si] = \alpha$ and $\Si \cap S_i = \emptyset\ (i=1,2)$, we have
\begin{align}
g(\Si) \geq \frac{(d_1-1)(d_1-2)}{2} + \frac{d_2}{2} (d_2 -2d_3 -1)
\label{ineq : example for a single surface not like Thom conj.}
\end{align}
by Corollary~\ref{cor : adj. ineq for a single surface}.

If $d_2 = 0$, this example is quite similar to Example~\ref{ex : example for a single surface like Thom conj.}.
Thus, by the same way in Example~\ref{ex : example for a single surface like Thom conj.}, the bound (\ref{ineq : example for a single surface not like Thom conj.}) is optimal under the condition $\Si \cap S_i = \emptyset\ (i=1,2)$ in the case of $d_2 = 0$.
However, for $d_2 > 0$, we have no answer to this realization problem: 
What kinds of $d_1, \ldots, d_4$ does there exist an embedded surface $\Si \subset X$satisfying
\[
g(\Si) = \frac{(d_1-1)(d_1-2)}{2} + \frac{d_2}{2} (d_2 -2d_3 -1)
\]
and $\Si \cap S_i = \emptyset\ (i=1,2)$ for?

In the same way, one can produce a lot of new realization problems from Corollary~\ref{cor : adj. ineq for a single surface}.
\end{ex}

\subsection{General form of the main theorem}
\label{subsection : General form of the main theorem}

In this subsection, we formulate the most general form of our main theorem and give the adjunction inequality for a single surface in $m\CP^2\# n(-\CP^2)$.
We also give an alternative proof of Strle's adjunction inequalities.

We first describe a certain condition on embedded surfaces and a cohomology class which is a generalization of the conditions (A) and (B) in Theorem~\ref{main thm : special}.
Let $X$ be an oriented closed smooth $4$-manifold, $\Si_1, \ldots, \Si_k \subset X$ be embedded surfaces with zero self-intersection number, and $c \in H^2(X;\Z)$ be a cohomology class on $X$.
Set $\alpha_i = [\Si_i]$ and assume that $c \cdot \alpha_i \neq 0$ holds for each $i =1, \ldots , k $.
The set
\[
\mathcal{S} := \Set{I\subset\{1,\ldots ,k\} | 
I\neq\emptyset,\ \{\Si_i\}_{i\in I}\ {\rm are\ disjoint.}}
\]
has a structure of an abstract simplicial complex.
Consider the vector space
\[
\mathcal{V} := \R v_1 \oplus \cdots \oplus \R v_k
\]
generated by the vertices $v_1 = \{1\}, \ldots, v_k = \{k\}$ of the simplicial complex.
We denote by $\mathcal{P} \subset \mathcal{V}^\ast = \Hom(\mathcal{V},\R)$ the set
\[
\mathcal{P} :=
\Set{
\varphi\in\mathcal{V}^\ast | \begin{matrix}
\left<\varphi,v_i\right>\geq 0\ (1\leq i\leq k),\\
\Set{i\in\{1,\ldots,k\}|\left<\varphi,v_i\right>>0}\in\mathcal{S}\cup\{\emptyset\}
\end{matrix}
 }.
\]
($\mathcal{P}$ will be used as a ``parameter space" when we consider a family of metrics of $X$ in the proof of the main theorem.)
Let $Q$ be the intersection form of $X$.
We denote $b^+ = b^+(X)$ the maximal dimension of positive definite subspace with respect to $Q$.
Fix a $b^+$-dimensional positive definite (with respect to $Q$) subspace $V^+ \subset H^2(X;\R)$.
We denote by $V^-$ the orthogonal complement of $V^+$ with respect to $Q$.
Note that $V^-$ is a negative definite subspace.
Let $p_{V^+} : H^2(X; \R) = V^+ \oplus V^- \to V^+$ be the projection.
For each $\alpha_i$, we define the hyperplane $\Hyp_{\alpha_i}$ in $(V^+)^\ast = \Hom(V^+,\R)$ by
\begin{align*}
\Hyp_{\alpha_i} := \Set{f \in \Hom(V^+,\R) | f(p_{V^+}(\alpha_i)) = c\cdot\alpha_i}.
\end{align*}
(In the case of $X = 2\CP^2 \# n(-\CP^2)$, this hyperplane $\Hyp_{\alpha_i}$ corresponds to the line $L_i$ defined as (\ref{line L_i}).)
Note that $0 \notin \Hyp_{\alpha_i}$ since $c \cdot \alpha_i \neq 0$.

For a continuous map $F : \mathcal{P} \to (V^+)^\ast$, if there exist positive numbers $R_i>0$ such that
\begin{align}
F(\{\varphi\in \mathcal{P}\ |\ \left<\varphi,v_i\right>\geq R_i\}) \subset \Hyp_{\alpha_i},
\label{mapstoplane}
\end{align}
then there exists a compact subset $\mathcal{K} \subset \mathcal{P}$ such that $F(\mathcal{P} \setminus \mathcal{K}) \subset (V^+)^\ast \setminus \{0\}$ since $0 \notin \Hyp_{\alpha_i}$.
Thus a continuous map $F : \mathcal{P} \to (V^+)^\ast$ satisfying (\ref{mapstoplane}) induces
\[
F^\ast : H^\ast((V^+)^\ast,(V^+)^\ast \setminus \{0\};\Z) \to H^\ast(\mathcal{P}, \mathcal{P} \setminus \mathcal{K};\Z).
\]
Then we obtain the map
\[
F^\ast_{\rm cpt} : H^{b^+}((V^+)^\ast,(V^+)^\ast \setminus \{0\} ; \Z) \to H^{b^+}_c(\mathcal{P} ; \Z)
\]
by the composition of $F^\ast$ and the natural map $H^{b^+}(\mathcal{P}, \mathcal{P} \setminus \mathcal{K} ; \Z)\to H^{b^+}_c(\mathcal{P} ; \Z)$, where $H_c^\ast(\cdot)$ means the cohomology with compact supports.

Here we consider a condition for $\Si_1, \ldots, \Si_k$ and $c$.
In our main theorem, we will assume that $\Si_1, \ldots, \Si_k$ and $c$ satisfy the following Condition~\ref{main condition} consisting of two conditions.

\begin{condition}1
\begin{description}
\item[(i)] There exists a continuous map $\R^{b^+} \to \mathcal{P}$ which induces an isomorphism $H^{b^+}_c(\mathcal{P};\Z) \simeq H^{b^+}_c(\R^{b^+};\Z)\ (\simeq \Z)$.
\item[(ii)] If a continuous map $F : \mathcal{P} \to (V^+)^\ast$ satisfies (\ref{mapstoplane}) for sufficiently large positive numbers $R_i > 0\ (1 \leq i\leq k)$, then the mapping degree of $F:\mathcal{P}\to(V^+)^\ast$ is not zero, i.e., $F_{\rm cpt}^\ast$ is a non-trivial map.
\end{description}
\label{main condition}
\end{condition}

\begin{rem}
Here we give a description of $\Hyp_{\alpha_i}$ in Condition~\ref{main condition} when we use a basis of $V^+$.
Let $u_1, \ldots, u_{b^+}$ be a basis of $V^+$.
We define $a_{i,j}\in\R$ as $p_{V^+}(\alpha_i) = \sum_{j=1}^{b^+}a_{i,j}u_j$.
For each $i$, consider the hyperplane
\begin{align*}
\Hyp_{\alpha_i}^u := \Set{(x_1,\ldots,x_{b^+})\in\R^{b^+} | \sum_{j=1}^{b^+}a_{i,j}x_j = c \cdot \alpha_i}
\end{align*}
in $\R^{b^+}$.
This corresponds to $\Hyp_{\alpha_i}$ in Condition~\ref{main condition}.
\label{rem : description of main condition using basis}
\end{rem}

\begin{rem}
Here we explain that Condition~\ref{main condition} is a generalization of the combination of the conditions (A) and (B) in Theorem~\ref{main thm : special}.
In the situations of Theorem~\ref{main thm : special},
$\mathcal{P}$ is homeomorphic to $\R^2$.
Using $H_1, H_2$ as $u_1, u_2$ in Remark~\ref{rem : description of main condition using basis},
$\Hyp_{\alpha_i}$ corresponds to the line $L_i$.
Let assume that $\Si_i$'s and $c$ satisfy the conditions (A).
Suppose that a map $F : \mathcal{P} \to (V^+)^\ast$ satisfies (\ref{mapstoplane}) for sufficiently large positive numbers $R_i > 0$.
Then $F$ maps a large sphere $S(\R^2) \simeq S^1$ in $\R^2  (\simeq \mathcal{P})$ on a small neighborhood of the quadrilateral consisting of $L_i$'s with degree $\pm1$.
This means the mapping degree of $F:\mathcal{P}\to(V^+)^\ast$ is equal to $\pm1$.
Thus (ii) in Condition~\ref{main condition} is a generalization of the condition (A).
\label{rem : main condition is a generalization of conditions in the first theorem}
\end{rem}

\begin{rem}
If we take another $b^+$-dimensional positive definite subspace $W^+ \subset H^2(X;\R)$ instead of $V^+$, then $\Si_1, \ldots, \Si_k$ and $c$ satisfy Condition~\ref{main condition} if and only if the corresponding condition for $W^+$ holds.
(Indeed, two $b^+$-dimensional positive definite subspaces can be connected by an isotopy in $H^2(X;\R)$, and Condition~\ref{main condition} is the statement for only mapping degree, so the claim follows from the homotopy invariance of mapping degree.)
\end{rem}

The following theorem is the most general form of the main theorem.

\begin{theo}
Let $X$ be an oriented closed smooth $4$-manifold with $b_1(X) = 0$, $c\in H^2(X;\Z)$ be a characteristic with $c^2 > \sign(X)$, and $\alpha_1, \ldots, \alpha_k \in H_2(X;\Z)\ (k>b^{+}(X))$ be homology classes with 
\[
\alpha^2_i=0,\quad c \cdot \alpha_i \neq 0\quad (i=1,\ldots,k).
\]
For embedded surfaces $\Si_1, \ldots, \Si_{k} \subset X$ with $[\Si_i] = \alpha_i\ (1 \leq i \leq k)$, assume that $\Si_1, \ldots, \Si_{k}$ and $c$ satisfy Condition~\ref{main condition}. 
Then the inequality
\begin{align}
-\chi(\Si_i) \geq |c\cdot\alpha_i|
\label{pre-adj-ineq}
\end{align}
holds for at least one $i \in \{1,\ldots,k\}$.
\label{general}
\end{theo}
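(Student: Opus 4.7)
The plan is to argue by contradiction: assume every $\Sigma_i$ violates (\ref{pre-adj-ineq}), and deduce from this a continuous map $F : \mathcal{P} \to (V^+)^\ast$ which satisfies (\ref{mapstoplane}) and avoids $0 \in (V^+)^\ast$. This contradicts (ii) of Condition~\ref{main condition}, since such a map would have zero mapping degree yet $F^\ast_{\mathrm{cpt}}$ would also vanish on the generator.

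First I would build a $\mathcal{P}$-parameter family of Riemannian metrics $\{g_\varphi\}_{\varphi \in \mathcal{P}}$ on $X$. Because each $\alpha_i^2 = 0$, a tubular neighborhood of $\Sigma_i$ splits as $\Sigma_i \times D^2$, whose boundary $\Sigma_i \times S^1$ can be used to insert a long product cylinder $\Sigma_i \times S^1 \times [-T_i, T_i]$, where $T_i$ is taken as a nondecreasing function of $\langle \varphi, v_i\rangle$. When an index set $I$ lies in the simplicial complex $\mathcal{S}$ the surfaces $\{\Sigma_i\}_{i \in I}$ are mutually disjoint, so the insertions can be made simultaneously; the very definition of $\mathcal{P}$ is arranged to parametrize exactly the admissible combinations of neck lengths. (This construction is a mild generalization of the one used by Fr\o yshov in the instanton Floer setting.)

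Next, for the spin c structure $\mathfrak{s}$ with $c_1(\mathfrak{s}) = c$, I would define $F(\varphi) \in (V^+)^\ast$ to record the position of the wall in the $V^+$-coordinates, namely the $g_\varphi$-self-dual harmonic projection of $c$ transported back to $V^+$ via the orthogonal splitting $H^2(X;\R) = V^+ \oplus V^-$. The asymptotic content of Lemma~\ref{stretch} is that when $\langle \varphi, v_i\rangle \to \infty$ the self-dual harmonic representative of $c$ becomes concentrated in the $\alpha_i$-direction along the stretched neck, which forces $F(\varphi) \in \Hyp_{\alpha_i}$ once $\langle \varphi, v_i\rangle$ exceeds some threshold $R_i$; in particular $F$ satisfies (\ref{mapstoplane}).

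It remains to verify $F(\varphi) \neq 0$ for every $\varphi$. A value $F(\varphi) = 0$ places $g_\varphi$ on the Seiberg-Witten wall, and a small generic perturbation then produces a solution of the perturbed equations on $(X, g_\varphi)$. The role of Proposition~\ref{analysis} is to show that, under arbitrarily long neck stretching about any $\Sigma_i$, such a family of solutions must converge in the neck region to a finite-energy solution on $\R \times S^1 \times \Sigma_i$; feeding this limit into the standard Kronheimer-Mrowka curvature and Gauss-Bonnet argument would force $-\chi(\Sigma_i) \geq |c \cdot \alpha_i|$ for at least one $i$, contradicting the contradiction hypothesis. The main obstacle I anticipate is precisely this analytic step, namely obtaining a priori estimates that are uniform as $\varphi$ ranges over the noncompact parameter space $\mathcal{P}$ and extracting a usable neck limit; once it is in place, Lemma~\ref{stretch} together with the nowhere-vanishing of $F$ yields the map required by Condition~\ref{main condition}(ii), completing the contradiction.
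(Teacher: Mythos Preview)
Your argument has a substantive gap at the step ``verify $F(\varphi)\neq 0$ for every $\varphi$.'' The condition $F(\varphi_0)=0$ says precisely that $p_{+_{g_{\varphi_0}}}(c)=0$, i.e.\ that the \emph{reducible} configuration solves the unperturbed equations for $g_{\varphi_0}$; it does not by itself produce an \emph{irreducible} solution, and after a small generic perturbation the reducible simply ceases to be a solution without anything taking its place. Moreover, even granting a solution at $g_{\varphi_0}$, the zeros of $F$ necessarily lie in a fixed compact set $\mathcal{K}\subset\mathcal{P}$ (outside $\mathcal{K}$ the map $F$ is near the hyperplanes $\Hyp_{\alpha_i}$, which all miss $0$), so $\varphi_0$ has bounded neck lengths and Proposition~\ref{analysis}---which needs $R_1+\cdots+R_l>C_0/2$---does not apply. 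Thus your contradiction does not close.

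The paper runs the logic in the opposite direction and is not a proof by contradiction. One first uses Lemma~\ref{stretch} together with Condition~\ref{main condition} to show that $\mathcal{F}:\mathcal{P}\to(V^+)^\ast$ has nonzero mapping degree (Lemma~\ref{topological}). The essential step you are missing is a parametrized wall-crossing argument (Lemma~\ref{virtual nbd}): taking $T\simeq D^{b^+}$ mapping to $\iota(\mathcal{P}(\vec{R}_0))$ with $R_0$ large, one shows via an $S^1$-equivariant virtual neighborhood that if the parametrized \SW moduli space over $\partial T$ were empty then $\langle e(\Harm^+\!\to T,\,f_{\Harm^+}),[T]\rangle=0$; but this pairing equals the degree of $\mathcal{F}$, which is nonzero. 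Hence there \emph{must} exist a \SW solution for some metric on $\partial(\iota(\mathcal{P}(\vec{R}_0)))$ (Proposition~\ref{existence}), and such a metric has at least one neck stretched to length $\geq R_0$, so Proposition~\ref{analysis} now applies and yields $-\chi(\Si_i)\geq|c\cdot\alpha_i|$ for some $i$. In short, the nonzero degree is used to \emph{force} a solution on a highly stretched metric, not to be contradicted by a hypothetical nowhere-vanishing of $F$. (A minor side remark: Lemma~\ref{stretch} only gives $\mathcal{F}(\varphi)\to\Hyp_{\alpha_i}$ as $\langle\varphi,v_i\rangle\to\infty$, not exact membership; the paper handles this by homotoping $\mathcal{F}$ to a map satisfying~(\ref{mapstoplane}) exactly.)
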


The proof of Theorem~\ref{general} will be given in Section~\ref{section : Proof of the main theorem}.
From Remark~\ref{rem : main condition is a generalization of conditions in the first theorem}, Theorem~\ref{general} is a generalization of Theorem~\ref{main thm : special}.

\begin{rem}
One can eliminate the assumption $b_1(X) = 0$ in the statement of Theorem~\ref{general} by surgeries along generators of $H_1(X;\Z)$.
In fact, we can assume that loops which represents generators of $H_1(X;\Z)$ do not intersects with $\Si_i$'s by moving the loops along the fiber of the normal bundles of $\Si_i$'s.

For simplicity, we only consider the case of $b_1(X) = 0$ in this paper.
\end{rem}

From Theorem~\ref{general}, we can obtain the adjunction inequalities for $m\CP^2 \# n(-\CP^2)$ $(m,n \geq 2)$ under certain assumptions on geometric intersections.
Let $m \geq 2,n_p \geq 1\ (p = 1, \ldots, m)$ and $n = \sum_{p=1}^m n_p$.
For the $4$-manifold $X = m\CP^2 \# n(-\CP^2)$, we write $X = \#_{p=1}^m X_p$, where
\[
X_p = \CP^2_p \# n_p (-\CP^2) = \CP^2_p \# (\#_{q=1}^{n_p} (-\CP^2_{p,q})).
\]
Let $H_p$ denote a generator of $H_2(\CP^2_p;\Z)$.

\begin{cor}
For the $4$-manifold
\begin{align*}
X = m\CP^2 \# n(-\CP^2) \quad (m,n \geq 2),
\end{align*}
let $c \in H^2(X;\Z)$ be a characteristic with $c^2 > \sign(X)$ and $c \cdot H_1 > -3$.
Let $\alpha \in H_2(X;\Z)$ be a homology class with $\alpha^2 = 0$ and assume that
\[
(H_1 \cdot \alpha) (c \cdot \alpha) < 0.
\]
Let $\beta_{p, i} \in H_2(X_p;\Z)\ (p = 2, \ldots, m,\ i = 1,2)$ be homology classes with $\beta_{p, i}^2 = 0$ and $S_{p,i} \subset X_p \setminus ({\rm disk}) \subset X$ be surfaces with $[S_{p,i}] = \beta_{p,i}$.
Assume that
\begin{align*}
H_p \cdot \beta_{p, i} > 0,\quad (-1)^{i-1}c \cdot \beta_{p,i} > 0,\quad -\chi(S_{p,i}) < |c \cdot \beta_{p,i}|.
\end{align*}
Then, for an embedded surface $\Si \subset X$ satisfying $[\Si] = \alpha$ and $\Si \cap S_{p, i} = \emptyset\ (p=2,\ldots,m,\ i = 1, 2)$, the inequality
\[
-\chi(\Si) \geq |c \cdot \alpha|
\]
holds.
\label{cor : adj. ineq for a single surface : mCP^2}
\end{cor}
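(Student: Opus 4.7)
I follow the blueprint of Corollary~\ref{cor : adj. ineq for a single surface}, with Theorem~\ref{general} in place of Theorem~\ref{main thm : special}. Set $d := |c \cdot H_1| + 1$, $X' := X \# d^2(-\CP^2)$ with the new blow-ups $-\CP^2_{1,n_1+1}, \ldots, -\CP^2_{1,n_1+d^2}$ attached to the summand $X_1$, $c' := c - \sum_q E_{1,q}$, and $\gamma := d H_1 + \sum_q E_{1,q}$. The same computation as in the proof of Corollary~\ref{cor : adj. ineq for a single surface} gives $c' \cdot \gamma > 0$ and produces an embedded surface $S \subset \CP^2_1 \# (\#_q (-\CP^2_{1,q})) \setminus ({\rm disk}) \subset X'$ of genus $(d-1)(d-2)/2$ via (\ref{conn. sum construction}) that violates the adjunction inequality for $(c', \gamma)$. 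Each $S_{p,i}$ also violates the adjunction inequality for $(c', \beta_{p,i})$, since $c' \cdot \beta_{p,i} = c \cdot \beta_{p,i}$. Routine checks give that $c'$ is characteristic, $(c')^2 = c^2 - d^2 > \sign(X')$, and all of $\alpha, \beta_{p,i}, \gamma$ have zero self-intersection and nonzero pairing with $c'$.

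\textbf{Setup for Theorem~\ref{general}.} Take $k := 2m > b^+(X') = m$ embedded surfaces, enumerated as $(\Si_1, \alpha_1) := (\Si, \alpha)$, $(\Si_{m+1}, \alpha_{m+1}) := (S, \gamma)$, $(\Si_p, \alpha_p) := (S_{p,1}, \beta_{p,1})$ and $(\Si_{m+p}, \alpha_{m+p}) := (S_{p,2}, \beta_{p,2})$ for $2 \leq p \leq m$. Declare $(\Si_p, \Si_{m+p})$ an \emph{antipodal pair} for $1 \leq p \leq m$. The hypothesis $\Si \cap S_{p,i} = \emptyset$, the fact that $S$ is supported in $X_1 \# d^2(-\CP^2)$ while each $S_{p,i}$ ($p \geq 2$) is supported in $X_p$, and the disjointness of distinct summands $X_p, X_q$ ($p \neq q$) together force every non-antipodal pair of surfaces in the list to be disjoint. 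Therefore the simplicial complex $\mathcal{S}$ is the boundary complex of the $m$-dimensional cross-polytope, whose realization is $S^{m-1}$; the parameter space $\mathcal{P}$ is its open cone, homeomorphic to $\R^m = \R^{b^+(X')}$, and Condition~\ref{main condition}(i) is verified.

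\textbf{Hyperplane configuration and conclusion.} Using $H_1, \ldots, H_m$ as a basis of a positive-definite subspace $V^+ \subset H^2(X';\R)$, the hyperplanes of Remark~\ref{rem : description of main condition using basis} are as follows: $\Hyp^u_{\beta_{p,i}}$ is perpendicular to the $x_p$-axis with intercept of sign $(-1)^{i-1}$ (from $H_p \cdot \beta_{p,i} > 0$ and $(-1)^{i-1} c \cdot \beta_{p,i} > 0$); $\Hyp^u_\gamma$ is perpendicular to the $x_1$-axis with positive intercept; and $\Hyp^u_\alpha = \{\sum_p (H_p \cdot \alpha) x_p = c \cdot \alpha\}$ is oblique with negative $x_1$-intercept by the sign hypothesis $(H_1 \cdot \alpha)(c \cdot \alpha) < 0$. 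As a higher-dimensional analogue of Figure~\ref{figure : Proof of Corollary}, these $2m$ hyperplanes trace out the boundary of a topological $m$-ball around the origin, so any continuous $F$ satisfying (\ref{mapstoplane}) for sufficiently large $R_i$ has mapping degree $\pm 1$, and Condition~\ref{main condition}(ii) holds. Theorem~\ref{general} then forces the adjunction inequality for at least one $\Si_i$; since $S$ and every $S_{p,i}$ violate it by construction, the inequality must hold for $\Si = \Si_1$, yielding $-\chi(\Si) \geq |c \cdot \alpha|$.

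\textbf{Main obstacle.} The delicate step is the verification of Condition~\ref{main condition}(ii): the axis-aligned hyperplanes alone bound only an unbounded slab, and one must show that the single oblique hyperplane $\Hyp^u_\alpha$ closes off this slab on the negative $x_1$-side in a geometrically clean way so that the resulting arrangement is combinatorially dual to the cross-polytope boundary. Replacing $d$ by any larger integer is harmless (all inequalities in the setup are preserved and the strict violations by $S$ and the $S_{p,i}$'s are strengthened), so one can push $\Hyp^u_\gamma$ arbitrarily far in the positive $x_1$-direction and thereby guarantee that the oblique hyperplane is separated from the ``opposite'' face of the slab. The resulting degree computation is then a direct generalization of the quadrilateral winding argument in the plane.
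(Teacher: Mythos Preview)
Your proof is correct and follows the same approach as the paper's (very terse) argument: blow up to create the auxiliary class $\gamma$ and surface $S$, collect the $2m$ surfaces, identify $\mathcal{P}\simeq\R^m$ via the cross-polytope, and verify that the hyperplanes bound a polytope containing the origin. Your write-up is in fact considerably more detailed than the paper's, and your observation that one may enlarge $d$ to push $\Hyp^u_\gamma$ far to the right (thereby avoiding any combinatorial degeneracy between the oblique face and its opposite) is a helpful refinement the paper does not make explicit.

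One small point deserves mention. From ``every non-antipodal pair is disjoint'' you conclude that $\mathcal{S}$ \emph{equals} the boundary of the cross-polytope, but this inference only gives the inclusion $\mathcal{S}\supseteq(\text{cross-polytope})$. Equality requires that each antipodal pair actually intersect. For $(\Sigma,S)$ this is automatic, since $[\Sigma]\cdot[S]=d\,(H_1\cdot\alpha)\neq 0$; but nothing in the hypotheses prevents $S_{p,1}\cap S_{p,2}=\emptyset$ for some $p\geq 2$, and in that case $\mathcal{P}$ acquires extra cells and $H^{m}_c(\mathcal{P})$ may vanish, so Condition~\ref{main condition}(i) could fail. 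The fix is immediate: perturb $S_{p,1}$ slightly inside $X_p\setminus(\text{disk})$ so that it meets $S_{p,2}$, which preserves its homology class, genus, the strict violation $-\chi(S_{p,1})<|c\cdot\beta_{p,1}|$, and all the required disjointnesses from surfaces supported in other summands. The paper's one-paragraph proof glosses over the same point.
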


\begin{proof}[Proof of Corollary~\ref{cor : adj. ineq for a single surface : mCP^2}]
Recall the arguments in the proof of Corollary~\ref{cor : adj. ineq for a single surface}.
We have $\mathcal{P} \simeq \R^{m}$ in this case and we consider the $(m-1)$-dimensional ``polytope" instead of the quadrilateral in \S~2.
We can obtain $\gamma$ as in the proof of Corollary~\ref{cor : adj. ineq for a single surface}.
By the same argument in Remark~\ref{rem : main condition is a generalization of conditions in the first theorem}, it is enough to show that the ``polytope" obtained from $\alpha$, $\beta_{p,i}$'s and $\gamma$ is a bounded ``polytope" including the origin of $\R^m$ in its interior.
Then the proof is definitely similar to the proof of Corollary~\ref{cor : adj. ineq for a single surface}.
\end{proof}

Here we give a generalization of Example~\ref{ex : example for a single surface like Thom conj.}.
Let $E_{p,q}$ denote a generator of $H_2(-\CP^2_{p,q};\Z)$.

\begin{ex}
Let give natural numbers $m \geq 2,\ d_1 \geq 4,\ n_1 \geq d_1^2$ and
\[
d_{p,2} \geq 1,\quad d_{p,3} \geq 2,\quad n_p \geq \max\{ d_{p,2}^2, d_{p,3}^2\}\quad(p=2,\ldots ,m).
\]
Put $n = \sum_{p=1}^m n_p$.
For $X = m\CP^2 \# n(-\CP^2)$,
let us consider the homology classes
\begin{align*}
\alpha &:= d_1H_1 - \sum_{q=1}^{d_1^2}E_{1,q},\\
\beta_{p,1} &:= d_{p,2}H_p + \sum_{q = 1}^{d_{p,2}^2} E_{p,q} \quad (p=2,\ldots, m),\\
\beta_{p,2} &:= d_{p,3}H_p - \sum_{q = 1}^{d_{p,3}^2} E_{p,q} \quad (p=2,\ldots, m).
\end{align*}
For $p = 2, \ldots ,m$ and $i = 1, 2$, let $S_{p,i} \subset X_p \setminus ({\rm disk}) \subset X$  be surfaces with $[S_{p,i}] = \beta_{p,i}$ obtained as (\ref{conn. sum construction}).
For the characteristic $c = 3H_1 + \sum_{p=2}^m H_p -\sum_{p=1}^m\sum_{q=1}^n E_{p,q}$,
it is easy to see that $\alpha,\ \beta_{p,i}$ and $c$ satisfy the assumptions of Corollary~\ref{cor : adj. ineq for a single surface}.
Thus, for an embedded surface $\Si \subset X$ satisfying $[\Si] = \alpha$ and $\Si \cap S_{p, i} = \emptyset$ $(p = 2, \ldots, m,\ i = 1, 2)$, we have
\begin{align}
g(\Si) \geq \frac{(d_1-1)(d_1-2)}{2}
\label{ineq : example for a single surface like Thom conj. general}
\end{align}
by Corollary~\ref{cor : adj. ineq for a single surface : mCP^2}.

In the same way in Example~\ref{ex : example for a single surface like Thom conj.}, the inequality (\ref{ineq : example for a single surface like Thom conj. general}) is the optimal bound under the condition $\Si \cap S_{p,i} = \emptyset$ $(p=2,\ldots,m,\ i = 1, 2)$.
\end{ex}

In Strle~\cite{MR2064429}, he showed that the adjunction inequality holds for at least one of disjoint $b^+$ surfaces with positive self-intersection numbers.
On the other hand, Dai-Ho-Li~\cite{MR3465838} derived an alternative simple proof of Strle's theorem in the case of $b^+=1$.
Here we give an alternative proof of Strle's adjunction inequality for any $b^+$.

\begin{cor}(Strle~\cite{MR2064429} Theorem A, Theorem B. See also Dai-Ho-Li~\cite{MR3465838} Theorem 1.4.)
For a surface $\Si$, put
\[
\chi^-(\Si) := \max\{-\chi(\Si),0\}.
\]
Let $X$ be an oriented closed smooth $4$-manifold with $b_1(X) = 0$ and  $c \in H^2(X;\Z)$ be a characteristic  with $c^2 > \sign(X)$.
\begin{description}
\item[(A)] In the case of $b^+(X)=1$, let $\alpha \in H_2(X;\Z)$ be a homology class with $\alpha^2 > 0$ and  $\Si \subset X$ be an embedded surface with $[\Si] = \alpha$.
Then the inequality
\begin{align}
\chi^-(\Si) \geq -|c\cdot\alpha| + \alpha^2
\end{align}
holds.\\
\item[(B)] In the case of $b^+(X)>1$, let $\alpha_1, \ldots, \alpha_{b^+} \in H_2(X;\Z)$ be homology classes with $\alpha_i^2 > 0\ (1 \leq i \leq b^+)$ and $\Si_1, \ldots, \Si_{b^+} \subset X$ be embedded surfaces with $[\Si_i] = \alpha_i$.
Assume that $\Si_1, \ldots, \Si_{b^+}$ are disjoint.
Then the inequality
\begin{align}
\chi^- (\Si_i) \geq - |c\cdot\alpha_i| + \alpha_i^2
\label{ThmB}
\end{align}
holds for at least one $i \in \{1,\ldots,b^+\}$.
\end{description}
\label{positive self-intersection}
\end{cor}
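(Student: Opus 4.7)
The plan is to reduce Strle's positive self-intersection setting to Theorem~\ref{general} via a blow-up and tubing construction. First, I observe that one may assume $\alpha_i^2 > |c\cdot\alpha_i|$ for every $i$: otherwise, the right-hand side of the desired inequality~(\ref{ThmB}) is non-positive for that $i$, and the conclusion $\chi^-(\Si_i) \geq 0$ holds trivially. Under this nontrivial-regime assumption, the goal is to build, out of each $\Si_i$, a pair of self-intersection-zero surfaces and to configure them on a suitable blow-up of $X$ so that Theorem~\ref{general} applies.

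Concretely, I would blow up $X$ at $N := \sum_{i=1}^{b^+} \alpha_i^2$ points, with $\alpha_i^2$ of them chosen in a tubular neighborhood of $\Si_i$ but off $\Si_i$, the neighborhoods being mutually disjoint. Let $X'$ be the result, with exceptional classes partitioned as $\{F_q\}_{q\in I_i}$, $|I_i|=\alpha_i^2$. In $X'$, tube $\Si_i$ with the spheres $\{F_q\}_{q\in I_i}$ in the two opposite orientations to obtain embedded surfaces $\Si_{i,+}, \Si_{i,-}$ of genus $g(\Si_i)$ representing $\alpha_{i,\pm} := \alpha_i \pm \sum_{q\in I_i} F_q$, both of self-intersection $\alpha_i^2-\alpha_i^2=0$. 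By construction, $\Si_{i,\pm}$ and $\Si_{j,\pm}$ (with any signs) are disjoint whenever $i\neq j$, while $\Si_{i,+}$ and $\Si_{i,-}$ necessarily share $\Si_i$ minus some disks and hence intersect. Setting $c' := c + \sum_q F_q$, this is a characteristic on $X'$ satisfying $(c')^2 = c^2 - N > \sign(X) - N = \sign(X')$, with $c'\cdot\alpha_{i,+} = c\cdot\alpha_i - \alpha_i^2 < 0$ and $c'\cdot\alpha_{i,-} = c\cdot\alpha_i + \alpha_i^2 > 0$; both are nonzero.

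Next I verify Condition~\ref{main condition} for the configuration $\{\Si_{i,\pm}\}$. The simplicial complex $\mathcal{S}$ on the $2b^+$ vertices $\{(i,\pm)\}$ is the simplicial join of $b^+$ copies of $S^0$, so $|\mathcal{S}|\simeq S^{b^+-1}$, and $\mathcal{P}$, as the open cone on $|\mathcal{S}|$ sitting in the positive orthant, is homeomorphic to $\R^{b^+}$; this supplies condition (i). For (ii), I take $\{\alpha_i\}$ as an orthogonal basis of $V^+(X') = V^+(X)$ (pairwise orthogonal and positive-definite since the $\Si_i$'s are disjoint with positive squares). Since $p_{V^+}(\alpha_{i,\pm}) = \alpha_i$, the hyperplanes $\Hyp_{\alpha_{i,\pm}}$ become the parallel hyperplanes $\{f_i = c'\cdot\alpha_{i,\pm}\}$ in $(V^+)^*\simeq\R^{b^+}$, and together they bound a rectangular box containing the origin in its interior exactly because $|c\cdot\alpha_i| < \alpha_i^2$. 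Any map $F$ satisfying (\ref{mapstoplane}) then has mapping degree $\pm 1$. Applying Theorem~\ref{general} yields an index $(i_0,\epsilon_0)$ with $-\chi(\Si_{i_0,\epsilon_0}) \geq |c'\cdot\alpha_{i_0,\epsilon_0}|$; since $\chi(\Si_{i_0,\pm}) = \chi(\Si_{i_0})$ and $\min_\pm |c'\cdot\alpha_{i_0,\pm}| = \alpha_{i_0}^2 - |c\cdot\alpha_{i_0}|$, the inequality (\ref{ThmB}) follows at $i_0$. Part (A) is the $b^+ = 1$ specialization of the same construction, producing two tubed surfaces $\Si_\pm$ on $X'$ with $\mathcal{P}\simeq\R$.

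The main obstacle will be the topological verification of Condition~\ref{main condition}(ii): matching the product structure of $\mathcal{P}$ against the rectangular arrangement of the hyperplanes $\Hyp_{\alpha_{i,\pm}}$, and confirming that the inequalities $|c\cdot\alpha_i| < \alpha_i^2$ place the origin in the bounded rectangular region so that any $F$ with the prescribed asymptotic behavior has nonzero degree. The geometric side — arranging the blow-ups and tubings so that cross-index disjointness and same-index intersection simultaneously hold — is routine but requires care in choosing the blow-up loci.
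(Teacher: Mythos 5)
Your proposal is correct and follows essentially the same route as the paper's own proof: reduce to the case $|c\cdot\alpha_i|<\alpha_i^2$, blow up $\alpha_i^2$ times per surface, form the two square-zero surfaces in classes $\alpha_i\pm\sum F_q$ (disjoint across indices, intersecting within an index), take $V^+$ spanned by the $\alpha_i$ so that the hyperplanes $\Hyp_{\alpha_{i,\pm}}$ bound a cuboid containing the origin, verify Condition~\ref{main condition}, and apply Theorem~\ref{general}. The only differences are cosmetic (the sign convention $c'=c+\sum F_q$ versus $c-\sum E^i$, and tubing with exceptional spheres at blow-up points off $\Si_i$ versus the paper's connected-sum description).
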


\begin{rem}
Strle~\cite{MR2064429} and Dai-Ho-Li~\cite{MR3465838} showed that, when $g(\Si)=0$, a sharper result
\[
-2 = -\chi(\Si) \geq - |c\cdot\alpha| + \alpha^2
\]
holds.
Dai-Ho-Li used results in Morgan-Szab\'{o}-Taubes~\cite{MR1438191} to treat this case.
\label{Strle and Dai-Ho-Li}
\end{rem}

\begin{proof}[Proof of Corollary~\ref{positive self-intersection}]
The claim {\bf (A)} can be regarded as a special case of {\bf (B)} by putting $\alpha=\alpha_1$ and $\Si=\Si_1$, so we prove only {\bf (B)}.
Let $n_i := \alpha_i^2( > 0)$.
We may assume that $|c\cdot\alpha_i| < n_i$ holds for any $i = 1, \ldots, n$.
Let $X'$ be
\[
X' := X \# n_1 (-\CP^2) \# \cdots \# n_{b^+} (-\CP^2)
\]
and write
\[
n_i(-\CP^2) = \#_{q=1}^{n_i}(-\CP^2_{i,q}).
\]
Let $E_{i, q}$ be a generator of $H^2(-\CP^2_{i,q};\Z)$.
Set $E^i := E_{i, 1} + \cdots + E_{i, n_i}$ and $c' := c - \sum_{i=1}^{b^+} E^i \in H^2(X';\Z)$.

For each $i$, let us denote by $\Si_i^\pm\subset X'$ for $\Si_i^\pm:=\Si_i\#n_i\CP^1$, where the orientation of the connected sum is taken as 
\[
\alpha_i^\pm := [\Si_i^\pm]=\alpha_i\pm E^i.
\]
For all $i$
\begin{align}
c' \cdot \alpha_i^+ = c \cdot \alpha_i + n_i > 0,\quad c' \cdot \alpha_i^- = c \cdot \alpha_i - n_i < 0
\label{sign}
\end{align}
hold since we assumed that $|c\cdot\alpha_i| < n_i$.
Note that the self-intersection number of any $\Si_i^\pm$ is zero. 

Let take a $b^+$-dimensional positive definite subspace
\[
V^+ := {\rm span}\Set{\alpha_1,\ldots,\alpha_{b^+}} \subset H^2(X;\R)
\]
and fix $u_1 = \alpha_1,\ldots, u_{b^+} = \alpha_{b^+}$ as a basis of $V^+$.
Then
\[
p_{V^+}(\alpha_i^\pm) = \alpha_i\ (i =1, \ldots, b^+)
\]
holds for the projection $p_{V^+} : H^2(X;\R) = V^+ \oplus V^- \to V^+$.
Dividing $\R^{b^+}$ by the hyperplane
\begin{align*}
\Hyp_{\alpha_i^\pm}^u = \Set{ (t_1, \ldots, t_{i-1}, c' \cdot \alpha_i^\pm, t_{i+1}, \ldots, t_{b^+}) | t_1, \ldots, t_{i-1}, t_{i+1}, \ldots, t_{b^+} \in \R},
\end{align*}
we have two half spaces ; we denote by $\Half_{\alpha_i^\pm}$ the half space which includes the origin of $\R^{b^+}$.
Then by (\ref{sign}), the polytope
\[
P = 
\bigcap_{\substack{1 \leq i \leq b^+,\\ \epsilon_i \in \{+, -\}}}  \Half_{\alpha_i^{\epsilon_i}}^u
\]
is a cuboid including the origin of  $\R^{b^+}$ in its interior.

On the other hand,  if $j_1 \neq j_2$, we have $\Si_{j_1}^{\epsilon_{j_1}} \cap \Si_{j_2}^{\epsilon_{j_2}} = \emptyset$ for all $\epsilon_{j_1}, \epsilon_{j_2} \in \{+, -\}$.
Thus, for the simplicial complex $\mathcal{S}$ obtained by $\Si_1^\pm, \ldots, \Si_{b^+}^\pm$, we have an isomoprhism
\[
\mathcal{S} \simeq \Set{ \Set{ \Si_{j_1}^{\epsilon_1}, \ldots , \Si_{j_i}^{\epsilon_i}} | 1\leq i \leq b^+,\ \epsilon_1, \ldots, \epsilon_i \in \{+, -\},\  j_1, \ldots, j_{i}\ {\rm are\ distinct.} }
\]
as simplicial complexes.
Thus $\mathcal{P}$ is homeomorphic to $\R^{b^+}$.
Let $v_i^\pm\ (1\leq i \leq b^+)$ denote the vertices of $\mathcal{S}$.
Then, for large numbers $R_i > 0\ (1 \leq i \leq b^+)$, the space
\[
\bigcap_{\substack{1 \leq i \leq b^+,\\ \epsilon_i \in \{+, -\}}} \{\varphi\in \mathcal{P}\ |\ \left<\varphi,v_i^{\epsilon_i} \right> < R_i\}
\]
naturally has a structure of polytope and is not only homeomorphic but also equivalent to the cuboid $P$ as simplicial complexes.
Therefore it is easy to see that $\Si_1^\pm, \ldots, \Si_{b^+}^\pm$ and $c'$ satisfy Condition~\ref{main condition}.
Thus we can apply Theorem~\ref{general}, then the inequality corresponding (\ref{pre-adj-ineq}) holds for at least one of $\Si_1^\pm, \ldots, \Si_{b^+}^\pm$.
If the inequality holds for $\Si_i^+$, we have
\[
\chi^-(\Si_i) \geq c \cdot \alpha_i + \alpha_i^2,
\]
and if the inequality holds for $\Si_i^-$, we have
\[
\chi^-(\Si_i) \geq -c \cdot \alpha_i + \alpha_i^2.
\]
Hence in both cases, we have (\ref{ThmB}) for $i$.
\end{proof}

\section{Proof of the main theorem}
\label{section : Proof of the main theorem}

In this section, we prove Theorem~\ref{general} assuming Lemma~\ref{stretch} and Proposition~\ref{analysis} which will be shown in the next section.

Let $X, \alpha_1, \ldots, \alpha_k, \Sigma_1, \ldots, \Sigma_k$ and $c$ be the one in the statement of Theorem~\ref{general}.
The key to the proof is to construct a family of metrics of $X$ by stretching the neighborhood of surfaces $\Si_1, \ldots, \Si_k$ and to describe positions of metrics in relation to the wall in terms of several embedded surfaces.
This construction of the family of metrics is a slight generalization of one due to Fr{\o}yshov~\cite{MR2052970} used in the context of instanton Floer homology.

We define a continuous injection $\iota : \mathcal{P} \to \Met(X)$ as follows, where $\Met(X)$ is the space of Riemannian metrics on $X$.

Fix a metric $g_0$ on $X$.
For each surface $\Si_i$, let us consider the normal bundle $\nu_i \to \Si_i$.
We identify the total space $\nu_i$ with a neighborhood of $\Si_i$ in $X$.
For the sphere bundle $S(\nu_i)$, there exists a neighborhood $U_i$ in $X$ which is diffeomorphic to $[0,1] \times S^1 \times \Si_i$.
By taking sufficiently small neighborhood, we may assume that $U_i \cap U_j = \emptyset$ if $\Sigma_i \cap \Sigma_j = \emptyset$.
We write $V_i$ for the neighborhood of $\Si_i$ which corresponds to $[1/3,2/3] \times S^1 \times \Si_i$ via the diffeomorphism $U_i \simeq [0,1] \times S^1 \times \Si_i$.
For $\varphi \in \mathcal{P}$, let $\mathscr{S}(\varphi)$ denote the set
\[
\Set{i \in \{1,\ldots,k\} | \left<\varphi, v_i\right> > 0} \in \mathcal{S} \cup \{\emptyset\}.
\]

\begin{enumerate}
\item In the case of $\mathscr{S}(\varphi) = \emptyset$, we define $\iota(\varphi) = g_0$.
\item In the case of $\mathscr{S}(\varphi) \neq \emptyset$, recall that $\{\Si_i\}_{i\in\mathscr{S}(\varphi)}$ are disjoint.
For each $i \in \mathscr{S}(\varphi)$, we will give the following metric $\iota(\varphi;U_i)$ on $U_i$, and we define $\iota(\varphi)$ as the metric on $X$ obtained by gluing $\iota(\varphi;U_i)$ and $g_0|_{X \setminus \coprod_{i\in\mathscr{S}(\varphi)} V_i}$ by a partition of unity.
\begin{enumerate}
\item When $\left<\varphi, v_i\right> \leq1$, connect $g_0|_{U_i}$ and the product metric $g_{[0,1] \times S^1\times\Si_i}$ on $U_i$ by a line, and let $\iota(\varphi;U_i)$ be the metric on $U_i$ in the time of $\left<\varphi, v_i\right>$ in the line.
\item When $\left<\varphi, v_i\right> \geq 1$, let $\iota(\varphi;U_i)$ be the metric obtained by $\left<\varphi, v_i\right>$-scalar expansion for $[0,1]$-direction of the product metric $g_{[0,1]\times S^1 \times \Si_i}$ on $U_i$.
\end{enumerate}
\end{enumerate}

Here we use a common partition of unity in (2) for any $\varphi$.
Then $\iota$ is a continuous map.
For $i \in \mathscr{S}(\varphi)$, when $\left<\varphi, v_i\right> \geq 1$, we call the metric $\iota(\varphi)$ ``the metric stretched in the neighborhood of $\Si_i$ by the length $\left<\varphi, v_i\right>$".

\begin{rem}
If we consider only genus bounds for embedded surfaces, without loss of generality, we may assume that all surfaces intersect transversely by a deformation using an isotopy which keeps surfaces which are originally disjoint being disjoint.
Under the assumption, if $\Si_i \cap \Si_j \neq \emptyset$, one can obtain a metric which is the product metric around both $\Si_i$ and $\Si_j$ by taking the metric around each intersection point $p \in \Si_i \cap \Si_j$ as $\Si_i$ and $\Si_j$ intersect orthogonally.
If we choose the initial metric $g_0$ so that it is the product metric around the neighborhoods of all surfaces by this construction, we can define the continuous injection $\iota : \mathcal{P} \to \Met(X)$ in the following simple way:

For $\varphi\in \mathcal{P}$,
\begin{enumerate}
\item In the case of $\mathscr{S}(\varphi) = \emptyset$, we define $\iota(\varphi) = g_0$.
\item In the case of $\mathscr{S}(\varphi) \neq \emptyset$, let $\iota(\varphi)$ be the metric stretched in the neighborhood of each $\Si_i$ by the length $\left<\varphi, v_i\right>$ from the initial metric $g_0$.
\end{enumerate}

\end{rem}

For a metric $g$ on $X$, let $\Harm^g(X)$ and $\Harm^{+_g}(X)$ be the space of harmonic 2-forms and the space of harmonic self-dual 2-forms respectively.
Let us denote by $h_g : H^2(X;\R) \to \Harm^g(X)$ for the isomorphism defined by Hodge theory .
We wirte $\varphi_g : \Harm^{+_g}(X) \to V^+$ for the composition of the isomorphism $h_g^{-1}|_{\Harm^{+_g}(X)} : \Harm^{+_g}(X) \to H^2(X;\R)$ and the projection $p_{V^+} : H^2(X;\R) = V^+\oplus V^- \to V^+$.
Note that $\varphi_g:\Harm^{+_g}(X)\to V^+$ is a linear isomorphism since $\Ker\varphi_g \simeq V^+ \cap\Ker p_{V^+} = V^+ \cap V^- = \{0\}$.
We define a Euclidean metric on $H^2(X;\R) = V^+ \oplus V^-$ by the intersection on $V^+$ and $-1$ times intersection on $V^-$.

The following lemma will be proved in the next section.
\begin{lem}
Let $l>1$ and $\Si_1, \ldots, \Si_l \subset X$ be disjoint surfaces with zero self-intersection number.
For a metric $g$ on $X$, we write
\begin{align}
\omega_g^i := \varphi_g^{-1}(p_{V^+}(\alpha_i)) \in \Harm^{+_g}(X)\ (1 \leq i \leq l).
\label{harm}
\end{align}
For any positive numbers $R_1, \ldots, R_l > 0$, 
\begin{align}
|\alpha_i - [\omega^i_{g_{(R_1,\ldots,R_l)}}]|^4
\leq \frac{C\cdot\Area({\Si_i})}{\Area([0,R_i]\times S^1)}\ (1 \leq i \leq l)
\end{align}
hold for any metric $g_{(R_1, \ldots, R_l)}$ on $X$ stretched in the neighborhood of $\Si_i$ by the length $R_i$ from the initial metric $g_0$.
(Here it is not necessary that $g_{(R_1, \ldots, R_l)}$ coincides with the initial metric $g_0$ on the complement of the neighborhoods of $\Si_1,\ldots,\Si_l$.)
Therefore for each $i$ we have
\[
\lim_{R_i\to\infty} [\omega^i_{g_{(R_1, \ldots, R_l)}}] = \alpha_i,
\]
where the limit is uniform with respect to $R_1, \ldots, R_{i-1}, R_{i+1}, \ldots, R_l$. 
\label{stretch}
\end{lem}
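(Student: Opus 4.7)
The plan is to exploit self-duality of $\omega := \omega^i_{g_{(R_1,\ldots,R_l)}}$ together with the long-neck geometry, and thereby control $\alpha_i - [\omega]$, which by construction of $\omega$ lies in $V^-$. Expanding $|\alpha_i - [\omega]|^2 = -Q(\alpha_i - [\omega], \alpha_i - [\omega])$ and using $\alpha_i^2 = 0$ together with the self-duality identity $Q([\omega],[\omega]) = \|\omega\|_{L^2}^2$, I obtain the central identity
\[
|\alpha_i - [\omega]|^2 \;=\; 2 I_i - \|\omega\|_{L^2}^2, \qquad I_i := \int_{\Si_i}\omega \;=\; Q([\omega],\alpha_i).
\]
Non-negativity of the left-hand side forces $I_i \geq \tfrac12\|\omega\|_{L^2}^2 \geq 0$, hence $|\alpha_i - [\omega]|^2 \leq 2 I_i$, and it suffices to bound $I_i$.

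A metric-independent $L^2$ bound is obtained by writing $[\omega] = p_{V^+}(\alpha_i) + \nu$ with $\nu \in V^-$, which is allowed since $p_{V^+}([\omega]) = p_{V^+}(\alpha_i)$ by the very definition of $\omega = \varphi_g^{-1}(p_{V^+}(\alpha_i))$. Then $\|\omega\|_{L^2}^2 = Q(p_{V^+}(\alpha_i)) + Q(\nu) \leq Q(p_{V^+}(\alpha_i))$, since $V^+$ and $V^-$ are $Q$-orthogonal and $Q|_{V^-}$ is negative semi-definite. Thus $\|\omega\|_{L^2}^2 \leq |p_{V^+}(\alpha_i)|^2$, a constant depending only on $\alpha_i$ and the fixed subspace $V^+$.

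For the geometric input, the stretched portion $V_i \subset U_i$ is isometric to a product cylinder $[0, L_i] \times S^1 \times \Si_i$ with $L_i$ comparable to $R_i$. For each slice $\{t\}\times\{\theta\}\times\Si_i$, closedness of $\omega$ and homological invariance give $\int_{\{t\}\times\{\theta\}\times\Si_i}\omega = I_i$. At each point, the $\Si_i$-tangential component of the 2-form $\omega$ has absolute value bounded by $|\omega|_g$; combining this pointwise bound with Cauchy--Schwarz on $\Si_i$ yields
\[
I_i^2 \;\leq\; \Area(\Si_i)\,\|\omega\|_{L^2(\{t\}\times\{\theta\}\times\Si_i)}^2,
\]
and integrating over $(t,\theta) \in [0, L_i]\times S^1$ produces
\[
\Area([0, L_i]\times S^1)\, I_i^2 \;\leq\; \Area(\Si_i)\, \|\omega\|_{L^2(V_i)}^2 \;\leq\; \Area(\Si_i)\, \|\omega\|_{L^2(X)}^2.
\]

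Combining the three estimates yields $|\alpha_i - [\omega]|^4 \leq 4 I_i^2 \leq C \cdot \Area(\Si_i)/\Area([0, R_i]\times S^1)$, with $C$ depending only on $\alpha_i$ and $V^+$ (absorbing the fixed ratio $L_i/R_i$); this is the claimed inequality. Since the argument localizes to the $i$-th neck and invokes only global cohomological data away from it, the estimate is automatically uniform in the remaining parameters $R_j$ for $j\neq i$, which gives the uniform limit in the last sentence of the lemma. The only delicate point, rather than a true obstacle, is bookkeeping the precise length of the product-cylinder portion produced by the partition-of-unity gluing in the construction of $\iota(\varphi)$, so that the denominator remains comparable to $\Area([0, R_i]\times S^1)$; the self-duality and long-neck estimates do the real work.
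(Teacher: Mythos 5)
Your proposal is correct and takes essentially the same route as the paper: your slice-by-slice Cauchy--Schwarz/Fubini estimate over the long neck is exactly the paper's Lemma~\ref{cylinder} (Kronheimer--Mrowka), the metric-independent bound $\|\omega\|_{L^2}^2\leq |p_{V^+}(\alpha_i)|^2$ is the paper's constant $C_0$, and your identity $|\alpha_i-[\omega]|^2=2I_i-\|\omega\|_{L^2}^2$ (using $\alpha_i^2=0$ and $Q\geq 0$ on self-dual classes) is the same algebraic step the paper phrases as $2|\alpha_i\cdot[\omega]|\geq|\alpha_i-[\omega]|^2$. The uniformity in the remaining parameters is obtained just as in the paper, since the constant depends only on $\alpha_i$, $V^+$ and $\Area(\Si_i)$, not on the metric away from the $i$-th neck.
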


Let $p_{+_g} : \Harm^g(X) \to \Harm^{+_g}(X)$ be the projection to the self-dual part with respect to a metric $g$.
We write $p_{+_g}(c)$ for $p_{+_g}(h_g(c))$.
Let $\mathcal{F} : \mathcal{P} \to (V^+)^\ast$ be the composition of $\iota : \mathcal{P} \to \Met(X)$ and
\begin{align}
\Met(X) \to (V^+)^\ast\ ; \  g\mapsto (v\mapsto [p_{+_g}(c)] \cdot [\varphi_g^{-1}(v)] = c\cdot [\varphi_g^{-1}(v)]),
\label{section}
\end{align}

In the proof of the following lemma, we use Condition~\ref{main condition}.

\begin{lem}
Assume that $\Si_1, \ldots, \Si_k$ and $c$ satisfy Condition~\ref{main condition}.
Then there exists a compact subset $\mathcal{K} \subset \mathcal{P}$ such that $\mathcal{F}(\mathcal{P} \setminus \mathcal{K}) \subset(V^+)^\ast \setminus \{0\}$.
(Hence we can define the mapping degree of $\mathcal{F} : \mathcal{P} \to (V^+)^\ast$.)
Moreover, the mapping degree of $\mathcal{F} : \mathcal{P} \to (V^+)^\ast$ is not zero.
\label{topological}
\end{lem}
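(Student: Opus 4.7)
The plan is to evaluate $\mathcal{F}$ on the distinguished vectors $p_{V^+}(\alpha_i) \in V^+$, use Lemma~\ref{stretch} to pin down its asymptotic behaviour, and then apply Condition~\ref{main condition} (ii) to a $C^0$-small perturbation of $\mathcal{F}$.

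First I would unwind the definitions: the value of the functional $\mathcal{F}(\varphi)$ on $p_{V^+}(\alpha_i)$ is precisely
\[
\mathcal{F}(\varphi)(p_{V^+}(\alpha_i)) = c \cdot [\omega^i_{\iota(\varphi)}].
\]
Whenever $\langle \varphi, v_i\rangle \geq 1$ the index $i$ lies in $\mathscr{S}(\varphi)$, so the surfaces $\{\Si_j\}_{j\in\mathscr{S}(\varphi)}$ are mutually disjoint by the definition of $\mathcal{S}$, and by construction $\iota(\varphi)$ is stretched in a neighbourhood of $\Si_i$ by length $\langle \varphi, v_i\rangle$. Lemma~\ref{stretch} therefore gives
\[
\mathcal{F}(\varphi)(p_{V^+}(\alpha_i)) \longrightarrow c \cdot \alpha_i \qquad \text{as } \langle \varphi, v_i\rangle \to \infty,
\]
uniformly in the remaining pairings $\langle \varphi, v_j\rangle$. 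Since $c \cdot \alpha_i \neq 0$, this already yields the first claim of the lemma: outside a sufficiently large compact $\mathcal{K} \subset \mathcal{P}$ some $\langle \varphi, v_i\rangle$ must be large, and then $\mathcal{F}(\varphi)(p_{V^+}(\alpha_i))$ is close to $c \cdot \alpha_i \neq 0$, forcing $\mathcal{F}(\varphi) \neq 0$.

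Second, to identify the degree I would homotope $\mathcal{F}$ to a map $F_1$ that exactly satisfies the hypothesis (\ref{mapstoplane}) of Condition~\ref{main condition} (ii). Fix large thresholds $R_i$, smooth cutoffs $\rho_i$ on $\R_{\geq 0}$ with $\rho_i = 0$ on $[0, R_i/2]$ and $\rho_i = 1$ on $[R_i, \infty)$, and vectors $e_i \in (V^+)^\ast$ so that $e_i(p_{V^+}(\alpha_j)) = \delta_{ij}$ on each maximal simplex of $\mathcal{S}$ containing $i$. Set
\[
F_1(\varphi) := \mathcal{F}(\varphi) - \sum_i \rho_i(\langle \varphi, v_i\rangle)\bigl(\mathcal{F}(\varphi)(p_{V^+}(\alpha_i)) - c \cdot \alpha_i\bigr)\, e_i.
\]
A direct evaluation shows $F_1(\varphi) \in \Hyp_{\alpha_i}$ whenever $\langle \varphi, v_i\rangle \geq R_i$, so $F_1$ satisfies (\ref{mapstoplane}). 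By the uniform convergence from the first step, the correction term can be made arbitrarily small in $(V^+)^\ast$ by enlarging the $R_i$'s, so the affine homotopy $F_t := (1-t)\mathcal{F} + t F_1$ stays nonzero outside a single enlarged compact $\mathcal{K}'$. Homotopy invariance of $F_t^\ast{}_{\rm cpt}$ together with Condition~\ref{main condition} (ii) then yields $\deg \mathcal{F} = \deg F_1 \neq 0$.

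The main obstacle is arranging the dual vectors $e_i$ consistently across overlapping simplices of $\mathcal{S}$: the duality relations $e_i(p_{V^+}(\alpha_j)) = \delta_{ij}$ have to hold simultaneously on every maximal simplex containing $i$. This is plausible because any pair $i, j$ lying on a common simplex corresponds to disjoint surfaces and, implicitly, to compatible hyperplanes $\Hyp_{\alpha_i}, \Hyp_{\alpha_j}$ (the very existence of maps in Condition~\ref{main condition} (ii) requires these hyperplanes to intersect); should a single global choice of $e_i$ fail, the construction can be localised by a partition of unity on $\mathcal{P}$ subordinate to the open stars of the simplices, at the cost of a slightly more intricate but still $C^0$-small correction. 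All remaining estimates are immediate from the uniform convergence supplied by Lemma~\ref{stretch}.
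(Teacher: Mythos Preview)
Your argument follows exactly the route the paper takes: compute $\mathcal{F}(\varphi)(p_{V^+}(\alpha_i)) = c\cdot[\omega^i_{\iota(\varphi)}]$, invoke Lemma~\ref{stretch} to see that this lands in an $\epsilon$-neighbourhood of $\Hyp_{\alpha_i}$ once $\langle\varphi,v_i\rangle$ is large, deduce the compactness statement, and then pass to an approximating $F$ satisfying \eqref{mapstoplane} so that Condition~\ref{main condition}(ii) applies via a straight-line homotopy. The paper's own proof is in fact terser than yours on the last step: it simply asserts that ``$\mathcal{F}$ can be approximated by a continuous map $F$ satisfying \eqref{mapstoplane}'' and does not construct $F$ at all.

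The dual-vector obstacle you flag is genuine and is precisely what the paper elides. Already in the quadrilateral setting of Theorem~\ref{main thm : special} a global $e_1\in(V^+)^\ast$ would have to vanish on both $p_{V^+}(\alpha_2)$ and $p_{V^+}(\alpha_4)$ while taking the value $1$ on $p_{V^+}(\alpha_1)$ --- three affine conditions in a two-dimensional space. Your partition-of-unity localisation is the right repair: cover $\mathcal{P}$ by the closed cones $U_\sigma=\{\varphi:\mathscr{S}(\varphi)\subset\sigma\}$ over maximal simplices, choose simplex-dependent duals $e_j^\sigma$ with $e_j^\sigma(p_{V^+}(\alpha_i))=\delta_{ij}$ for $i,j\in\sigma$, and replace each $e_j$ in your formula by the corresponding weighted combination; one then checks directly that $F_1(\varphi)\in\Hyp_{\alpha_i}$ on $\{\langle\varphi,v_i\rangle\ge R_i\}$ because every $\sigma$ contributing at such a $\varphi$ necessarily contains $i$. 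The only residual assumption is that $\{p_{V^+}(\alpha_i)\}_{i\in\sigma}$ be linearly independent for each maximal simplex; the paper does not address this either, but it is worth noting that any linear relation $\sum_{i\in\sigma}\lambda_i\,p_{V^+}(\alpha_i)=0$ on a simplex forces $\sum_i\lambda_i(c\cdot\alpha_i)=0$ (apply the linear functional $\mathcal{F}(\varphi)$ and pass to the limit using Lemma~\ref{stretch}), so the hyperplane constraints on each simplex are automatically consistent and the degenerate case can be handled by working with a maximal independent subset.
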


\begin{proof}
For each metric $g$, we define $\omega^i_g \in \Harm^{+_g}(X)\ (1\leq i \leq k)$ by (\ref{harm}).
The image of $g$ by (\ref{section}) belongs to the set
\begin{align}
\Set{f\in\Hom(V^+,\R) = (V^+)^\ast | f(p_{V^+}(\alpha_i)) = c\cdot [\omega^i_g]\ (i=1,\ldots,k)}.
\label{image}
\end{align}
For any positive number $\epsilon > 0$, let take sufficiently large $R_i > 0$ for each $\Si_i$.
Then, by Lemma~\ref{stretch}, (\ref{image}) is contained in the $\epsilon$-neighborhood of $\Hyp_{\Si_i}$ if $g$ is a metric $g_{(\ast,\ldots,\ast,R_{i},\ast,\ldots,\ast)}$ stretched in the neighborhood of $\Si_i$ by the length more than $R_i$.
(Here the neighborhoods of other surfaces can be stretched and not stretched.)
Thus there exists a compact subset $\mathcal{K} \subset \mathcal{P}$ such that $\mathcal{F}(\mathcal{P} \setminus \mathcal{K}) \subset (V^+)^\ast \setminus \{0\}$ and $\mathcal{F} : \mathcal{P} \to (V^+)^\ast$ can be approximated by a continuous map $F : \mathcal{P} \to (V^+)^\ast$ satisfying (\ref{mapstoplane}).
Hence the lemma follows from Condition~\ref{main condition}.
\end{proof}

We now discuss ``wall crossing phenomena" for the moduli space of a family of \SW equations.
Family version of the \SW invariants is studied in Li-Liu~\cite{MR1868921}.
The proofs of the following Lemma~\ref{virtual nbd} and Proposition~\ref{existence} give a proof of non-triviality of a family version of the \SW invariant for a chamber. 

We first consider $S^1$-equivariant and family version of Ruan's virtual neighborhood technique in \cite{MR1635698}.
Let $T$ be a paracompact Hausdorff space and $\Psi : T \to \Met(X)$ be a continuous map.
For each $t \in T$, we set
\begin{align*}
\Harm^{+_t}: = &\Harm^{+_{\Psi(t)}},\\
(\rconfig)_t := &\Ker(d^\ast:L^2_{k}(i\Lambda^1,\Psi(t))\to L^2_{k-1}(i\Lambda^0,\Psi(t))),\\
(\cconfig)_t := &L^2_k(S^+,\Psi(t)),\\
\config_t := &(\rconfig)_t \times (\cconfig)_t,\\
(H_\R)_t := & L^{2}_{k-1}(i\Lambda^{+_t},\Psi(t)),\\
(H_\C)_t : = & L^{2}_{k-1}(S^-,\Psi(t)),\\
H_t : = & (H_\R)_t \times (H_\C)_t.
\end{align*}
Here $L^2_{k}(\cdot,\Psi(t))\ (k \geq 2)$ is the space defined by $L^2_k$-norm with respect to the metric $\Psi(t)$, $\Lambda^p=\Lambda^p T^\ast X$, $\Lambda^+$ is the self-dual part of $\Lambda^2$, and $S^\pm$ are the spinor bundles for the spin c structure corresponding to $c$.
For each $t \in T$, the map corresponding to the \SW equations with respect to the metric $\Psi(t)$ reduces to the $S^1 = U(1)$-equivariant map
\[
s_t : \config_t \to H_t\ ;\ (a,\Phi)\mapsto (F_{A_0+a}^{+_t}-\sigma(\Phi),D_{t,A_0+a}\Phi).
\]
Here $A_0$ is a fixed reference connection on the determinant line bundle for the spin c structure, $F^{+_t}_{A_0 + a}$ is the self-dual part of the curvature of a connection $A_0 + a$ with respect to the metric $\Psi(t)$, and $D_{t, A_0 + a}$ is the Dirac operator with respect to the connection $A_0 + a$ and the metric $\Psi(t)$.
$\sigma(\cdot)$ is the quadratic form
\[
\sigma(\Phi) = \Phi \otimes \Phi^\ast - \frac{|\Phi|^2}{2} \id.
\]
$s_t$ is a non-linear Fredholm map and the index satisfies $\ind{s_t} = d(c) + 1$, where $d(c)$ is the formal dimension of the \SW moduli space for the spin c structure.
We write $\config = \bigsqcup_{t \in T} \config_t,\ \rconfig = \bigsqcup_{t \in T} (\rconfig)_t \cdots,\ s=\{s_t\}_{t \in T}$.

Using $\Psi$, we obtain the vector bundle $\Harm^+ \to T$ by pull-back of the vector bundle
\[
\Harm^+ = \bigsqcup_{g\in \Met(X)} {\Harm^{+_g}(X)} \to \Met(X)
\]
on $\Met(X)$.
Here we fix a homology orientation of $X$.
Then the vector bundle $\Harm^+ \to T$ is oriented.
For each $t \in T$, the affine map
\[
(\rconfig)_t \times \Harm^{+_t} \to (H_\R)_t\ ; \ (a,\alpha) \mapsto F_{A_0+a}^{+_t} + \alpha
\]
is bijective.
(To prove that this map is injective, we have to assume $b_1(X)=0$.)
 We write $((a_0)_t ,(\alpha_0)_t)$ for the unique zero point of this bijective affine map.
This gives a section
\[
f_{\Harm^+} : T \to \Harm^+\ ;\ t \mapsto (\alpha_0)_t
\]
for the vector bundle $\Harm^+ \to T$.
Let $h_t(\cdot)$ be the harmonic part with respect to the metric $\Psi(t)$.
Since the Hodge decomposition $F_{A_0+a} = h_t(F_{A_0+a}) + db_t\ (b_t \in L^2_k(i\Lambda^1, \Psi(t)))$, we have $F_{A_0+a}^{+_t} = -2\pi i \cdot p_{+_t}(c) + d^{+_t}b_t$.
Hence
\[
(\alpha_0)_t - 2\pi i \cdot p_{+_t}(c) = d^{+_t}b_t \in \Harm^{+_t} \cap \im{d^+} = \{0\}.
\]
holds.
Thus we obtain
\begin{align}
f_{\Harm^+}(t) = 2\pi i \cdot p_{+_{g_t}}(c).
\label{eq : two sections}
\end{align}
We can define the relative Euler class
\[
e(\Harm^+ \to T, f_{\Harm^+}) \in H^{b^+}(T, T \setminus f_{\Harm^+}^{-1}(0) ; \Z)
\]
by taking pull-back of the Thom class of the vector bundle $\Harm^+ \to T$ using the section $f_{\Harm^+}$.
Here we consider the following condition for $T$ and $\Psi$.

\begin{condition}
The zero sets $s^{-1}(0)$ and $f_{\Harm^+}^{-1}(0)$ are compact.
In addition, for a subset  $T' \subset T$, $f_{\Harm^+}$ is nowhere vanishing on $T'$, and on $T'$ the parametrized moduli space $s^{-1}(0)|_{T'} = \bigsqcup_{t \in T'} s^{-1}_t(0)$ is empty.
(We also treat the case of $T' = \emptyset$.)
\label{compactness}
\end{condition}

Under the assumption that $T$ and $\Psi$ satisfy Condition~\ref{compactness}, we now construct the  $S^1$-equivariant and family version of Ruan's virtual neighborhood.
Since $s^{-1}(0)$ is compact, there exist the following five data
\begin{enumerate}
\item natural numbers $n=n_1+\cdots+n_k,\ m=m_1+\cdots+m_l\ (n_i,\ m_j\geq0)$,
\item $t_{\R,i},\ t_{\C,j} \in T\ (1\leq i \leq k,\ 1\leq j \leq l)$,
\item real linear maps $\phi_{\R,i}:\R^{n_i}\to H_{t_{\R,i}}\ (1\leq i \leq k)$,
\item complex linear maps $\phi_{\C,j} : \C^{m_j} \to (H_\C)_{t_{\C,j}}\ (1\leq j \leq l)$,
\item partitions of unity $\{\rho_{\R,i}\},\ \{\rho_{\C,j}\}$ subordinated to a finite open covering of $s^{-1}(0)$
\end{enumerate}
such that the differential along the fiber for $(\config \times_{T} \Harm^+) \times \R^n \times \C^m \to T$ of the section
\[
s + \varphi : (\config \times_{T} \Harm^+) \times \R^n \times \C^m \to \left((\config \times_{T} \Harm^+) \times \R^n \times \C^m\right) \times_{T} H
\]
for the Hilbert bundle is surjective at its zero points for each $t \in T$, where $\varphi$ is defined by
\begin{align*}
& \varphi : (\config \times_{T} \Harm^+) \times \R^n \times \C^m \to H\ ;\ \\
(x,\alpha,v,w) & = (x,\alpha,(v_1,\ldots,v_k),(w_1,\ldots,w_l))\\
& \mapsto \alpha + \sum_{i=1}^k\rho_{\R,i}(x)\phi_{\R,i}(v_i) + \sum_{j=1}^l \rho_{\C, j}(x) \phi_{\C,j}(w_j).
\end{align*}
Here we can take $\varphi$ being $S^1$-equivarinat and the partition of unity $\{\rho_{\R,i}\}$ vanishing on the $S^1$-invarinat set $\config^{S^1}$.
Since the condition that a map is surjective is an open condition, there exists a neighborhood $\mathscr{N}$ of $\config \times \{0\}\times \{0\}\times \{0\}$ in $(\config \times_{T} \Harm^+) \times \R^n \times \C^m$ such that the differential of $s+\varphi$ along the fiber is surjective on 
\[
\vnU := (s+\varphi)^{-1}(0) \cap \mathscr{N}.
\]
$\vnU_t = (s_t + \varphi_t)^{-1}(0) \cap \mathscr{N}$ is an $(\ind{s_t} + b^+ + n + 2m)$-dimensional manifold for each $t \in T$.
The $S^1$-invariant set $\vnU^{S^1}$ is the form of
\[
\vnU^{S^1} = \bigsqcup_{t \in T}{\vnU_t}^{S^1}=\bigsqcup_{t \in T}\{((a_0)_t,0,(\alpha_0)_t)\} \times  U \times \{0\} \simeq\bigsqcup_{t \in T}\{((a_0)_t,(\alpha_0)_t)\} \times  U,
\]
where $U$ is a small neighborhood of the origin in $\R^n$.
Take an $S^1$-invariant neighborhood $\mathcal{N}(\vnU^{S^1})$ of $\vnU^{S^1}$.
Write the projection by $\pi : \config \times_T \Harm^+ \times \R^n \times \C^m \to \R^n \times \C^m$.
The section
\[
f : \vnU\setminus \mathcal{N}(\vnU^{S^1}) \to \mathcal{E}
\]
for the vector bundle
\[
\mathcal{E} := \left(\vnU\setminus \mathcal{N}(\vnU^{S^1})\right) \times _T \Harm^+ \times \R^n \times \C^m \to \vnU\setminus \mathcal{N}(\vnU^{S^1})
\]
is obtained by considering for each $t \in T$
\begin{align*}
\left( \vnU\setminus \mathcal{N}(\vnU^{S^1}) \right)_t &\to \Harm^{+_t} \times \R^n \times \C^m\ ;\ x \mapsto ((\alpha_0)_t, \pi(x)).
\end{align*}

For each $t$, we identify the normal bundle $\nu_t \to (\vnU^{S^1})_t$ with a tubular neighborhood of $(\vnU^{S^1})_t$.
Set $\nu : = \bigsqcup_{t \in T}\nu_t$, $S(\nu) :=\bigsqcup_{t\in T}S(\nu_t)$, where $S(\nu_t)$ is the sphere bundle of $\nu_t$.
Then
\begin{align*}
\partial \left( \vnU \setminus \mathcal{N}(\vnU^{S^1}) \right) = S(\nu)
\label{boundart}
\end{align*}
holds.
By the identification
\begin{align*}
\nu \simeq \bigsqcup_{t \in T}\{((a_0)_t, (\alpha_0)_t)\} \times  U \times \C^k \simeq T \times  U \times \C^k,
\end{align*}
we regard $S(\nu) \simeq T \times  U \times S(\C^k)$ and we write $S(\nu)|_{T\times D(\R^n)}$ for
\[
S(\nu)|_{\bigsqcup_{t \in T}\{((a_0)_t, (\alpha_0)_t)\} \times D(\R^n)}.
\]
Here we may assume that the radius of the disk $D(\R^n)$ is sufficiently small, and the neighborhood $U$ of the origin in $\R^n$ satisfies $U \simeq \Int(D(\R^n))$.
Let $f_{\R} : D(\R^n) \to\underline{\R}^n,\ f_{\C} : S(\C^k) \to\underline{\C}^m$ be the sections obtained by the maps
 \begin{align*}
 D(\R^n) \to \R^n\ ;\ v \mapsto v,\ S(\C^k) \to \C^m\ ;\ u \mapsto 0.
 \end{align*}
Since we assumed that $(f_{\Harm^+})^{-1}(0)$ is compact, using a deformation by an $S^1$-equivariant homotopy preserving compact support and the product formula of Euler class, we obtain
\begin{align}
&e_{S^1}\left( \mathcal{E}|_{S(\nu)} \to S(\nu), f \right) \nonumber \\
=& e\left( \Harm^+ \to T ,f_{\Harm^+}\right) \cup e\left( \underline{\R}^n \to D(\R^n) ,f_{\R}\right) \cup e_{S^1}\left( \underline{\C}^m \to S(\C^k),f_{\C}\right)
\label{Euler classes}
\end{align}
under the ismorphism
\begin{align}
&H_{S^1}^{b^+ + n +2m}\left( S(\nu), S(\nu)|_{T' \times D(\R^n) \cup T \times S(\R^n)};\Z \right)\nonumber \\
\simeq & \bigoplus_{p + q + r = b^+ + n +2m} H^p( T, T';\Z) \otimes H^q(D(\R^n), S(\R^n);\Z) \otimes H_{S^1}^r(S(\C^k);\Z).
\label{Kunneth}
\end{align}
Here $e_{S^1}(\cdot)$ is the $S^1$-equivariant Euler class.
Since $c_1^2 > \sign(X)$, by considering the relation between $\dim\vnU_t$ and the formal dimension of the \SW moduli space, we have $m < k$.
Let $\mu \in H^2((\config\setminus \rconfig \times \{0\})/S^1;\Z)$ be the first Chern class of the principal $S^1$-bundle $\config\setminus \rconfig \times \{0\} \to (\config\setminus \rconfig \times \{0\})/S^1$.
We write $\alpha$ for
\[
\alpha := \mu^{2(k-1)-2m} \in H^{2(k-1)-2m}((\config\setminus \rconfig \times \{0\})/S^1;\Z).
\]
Set $\tilde{\config}=\config\times_T \Harm^+\times \R^n \times \C^m$ and let
\[
p : (\tilde{\config} \setminus \tilde{\config}^{S^1})/S^1 \to (\config \setminus \config^{S^1})/S^1=(\config\setminus \rconfig \times \{0\})/S^1
\]
be the projection.

\begin{lem}
Suppose that $T$ and $\Psi$ satisfy Condition~\ref{compactness}, and construct a virtual neighborhood $\vnU$ as above.
Then the following statements hold:
\begin{enumerate}
\item The cohomology class
\begin{align*}
&e_{S^1}\left( \mathcal{E}|_{S(\nu)} \to S(\nu) , f |_{S(\nu)} \right) \cup p^\ast \alpha\\
\in& H_{S^1}^{b^+ + n +2k-2}\left( S(\nu), S(\nu)|_{T' \times D(\R^n) \cup T \times S(\R^n)};\ \Z \right)
\end{align*}
corresponds to 
\begin{align*}
e\left(\mathcal{H}^+ \to T, f_{\mathcal{H}^+}\right) \in H^{b^+}(T, T' ;\ \Z)
\end{align*}
via the isomorphism (\ref{Kunneth}) and 
\[
H^{n}(D(\R^n), S(\R^n);\Z) \simeq \Z,\ H_{S^1}^{2k-2}(S(\C^k);\Z) \simeq \Z.
\]
\item Suppose also that $T$ is a $b^+$-dimensional oriented compact manifold and $T' = \partial T$.
Then for the fundamental class of $T$ (in the case of $T' \neq \emptyset$, fundamental class as a compact manifold with boundary) $[T] \in H_{b^+}(T, T' ;\Z)$, we have
\[
\left<e\left(\Harm^+ \to T , f_{\Harm^+}\right), [T]\right> = 0.
\]
\end{enumerate}
\label{virtual nbd}
\end{lem}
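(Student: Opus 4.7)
I will treat the two parts separately: part (1) reduces to a bookkeeping computation with the product formula~\eqref{Euler classes}, while part (2) will follow from an equivariant Stokes/extension argument.

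For part (1), I would start from~\eqref{Euler classes} and cup both sides with $p^*\alpha$. Under the K\"unneth isomorphism~\eqref{Kunneth}, the resulting class decomposes into a tensor product of three factors, which I analyze one at a time. The base factor is, by definition, $e(\Harm^+\to T,f_{\Harm^+})\in H^{b^+}(T,T';\Z)$, and Condition~\ref{compactness} is precisely what is needed to make this class live in the relative group. The $D(\R^n)$-factor is the relative Euler class of the identity section of $\underline{\R}^n\to D(\R^n)$, which equals the positive generator of $H^n(D(\R^n),S(\R^n);\Z)\simeq\Z$. For the $S(\C^k)$-factor, I realize the trivial bundle $\underline{\C}^m$ with its standard weight-one $S^1$-action, via the Borel construction, as $\mathcal{O}(1)^{\oplus m}$ over $\CP^{k-1}$, giving $e_{S^1}(\underline{\C}^m\to S(\C^k),0)=\mu^m$. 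Since $p^*\alpha$ restricts on $S(\C^k)$ to $\mu^{k-1-m}$, cupping yields $\mu^{k-1}$, i.e.\ the positive generator of $H^{2k-2}_{S^1}(S(\C^k);\Z)\simeq\Z$. Combining these three identifications with the K\"unneth decomposition establishes the correspondence.

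For part (2), by part (1) the number $\langle e(\Harm^+\to T,f_{\Harm^+}),[T]\rangle$ equals (up to sign) the pairing
\[
\langle e_{S^1}(\mathcal{E}|_{S(\nu)}\to S(\nu),f|_{S(\nu)})\cup p^*\alpha,\ [S(\nu),S(\nu)|_{T'\times D(\R^n)\cup T\times S(\R^n)}]\rangle.
\]
The strategy is to exhibit the integrand as the restriction to the boundary of a class defined on all of $W:=\vnU\setminus\mathcal{N}(\vnU^{S^1})$. Indeed, the section $f$ is defined on $W$ by construction, and $p^*\alpha$ is globally defined on $(\tilde{\config}\setminus\tilde{\config}^{S^1})/S^1$, so the cup-product class extends across $W$ starting from its boundary component $S(\nu)$. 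By Condition~\ref{compactness}, the relevant zero set ($s^{-1}(0)$ together with $f_{\Harm^+}^{-1}(0)$) is compact, and $f_{\Harm^+}$ is nowhere zero on $T'$ while the parametrized moduli is empty there; hence the support of $e_{S^1}(\mathcal{E},f)$ on $W$ is compact and disjoint from the auxiliary boundary strata. An equivariant Stokes-type argument with compact supports then forces the pairing over $S(\nu)$ to vanish.

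\textbf{Main obstacle.} The main technical difficulty lies in part (2): one must keep careful track of the several distinct boundary strata of $W$—namely $S(\nu)$ itself, the restriction $\vnU|_{T'}$ over the boundary of the parameter manifold, and the stratum over $T\times S(\R^n)$ where the finite-dimensional perturbation disk meets its boundary—and verify that the extension of the class across $W$ produces no residual contribution from the two auxiliary strata. The nowhere-vanishing of $f_{\Harm^+}$ on $T'$ combined with the emptiness of the parametrized moduli space $s^{-1}(0)|_{T'}$ supplied by Condition~\ref{compactness} is exactly what is needed to rule out these contributions and close the Stokes argument.
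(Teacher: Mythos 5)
Your proposal is correct and follows essentially the same route as the paper: part (1) is the same bookkeeping with the product formula for Euler classes, identifying the $D(\R^n)$-factor and the product $e_{S^1}(\underline{\C}^m,f_\C)\cup p^*\alpha=\mu^{k-1}$ as generators of the auxiliary factors, and part (2) is the same cobounding argument, which the paper phrases by pairing the extended class against $\partial\left[\left(\vnU\setminus\mathcal{N}(\vnU^{S^1})\right)/S^1\right]_{BM}$ in Borel--Moore homology rather than via your compactly supported Stokes formulation. The two are equivalent, and your emphasis on Condition~\ref{compactness} keeping the zero sets compact and away from the auxiliary boundary strata matches the role it plays in the paper's proof.
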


\begin{proof}[Proof of Lemma~\ref{virtual nbd}(1)]
Note that $e\left( \underline{\R}^n \to D(\R^n) ,f_{\R}\right)$ is the generator of
\[
H^{n}(D(\R^n), S(\R^n);\Z) \simeq \Z\]
and
\[
e_{S^1}\left( \underline{\C}^m \to S(\C^k),f_{\C}\right)\cup p^\ast\alpha \in H_{S^1}^{2m + \{2(k-1) - 2m\}}(S(\C^k);\Z)
\]
is the generator of
\[
H_{S^1}^{2m + \{2(k-1) - 2m\}}(S(\C^k);\Z) \simeq H^{2k-2}({\CP}^{k-1};\Z) \simeq \Z.
\]
Hence the claim follows from (\ref{Euler classes}).
\end{proof}

\begin{proof}[Proof of Lemma~\ref{virtual nbd}(2)]
Since
$\partial \left(\vnU \setminus \mathcal{N}(\vnU^{S^1})\right)/S^1
=S(\nu)/S^1 \simeq T \times U \times \CP^{k-1}$ holds,
we have
\begin{align*}
& \left<e\left(\Harm^+ \to T , f_{\Harm^+}\right), [T]\right>\\
=& \left<e_{S^1}\left( \mathcal{E}|_{S(\nu)} \to S(\nu) , f |_{S(\nu)} \right) \cup p^\ast\alpha ,[S(\nu)/S^1]_{BM} \right>\\
=& \left<e_{S^1}\left( \mathcal{E}|_{S(\nu)} \to S(\nu) , f |_{S(\nu)} \right)\cup p^\ast\alpha ,\partial \left[ \left(\vnU \setminus \mathcal{N}(\vnU^{S^1})\right)/S^1 \right]_{BM} \right> = 0.
\end{align*}
Here the subscript $BM$ means Borel-Moore homology.
\end{proof}

\begin{rem}
\begin{enumerate}
\item If $T$ is compact, it automatically follows that $s^{-1}(0)$ is compact by the same argument of usual compactness of the moduli space of the \SW equations.
\item  For Lemma~\ref{virtual nbd} (2), we can replace the above argument by the alternative one with $\Z/2$-coefficient in the case when $T$ is a manifold which is not orientable.
\end{enumerate}
\end{rem}

Before stating the next proposition, note that
\begin{align}
\Harm^{+_g}(X) \to \Hom(V^+,\R) = (V^+)^\ast\ ;\ \omega\mapsto(v\mapsto [\omega] \cdot [\varphi^{-1}_{g}(v)])
\label{trivialization}
\end{align}
is a linear isomorphism since the intersection form restricted to $\Harm^{+_g}$ is positive definite (in paticular non-degenerate).
By using this isomorphism, we obtain a trivialization of the vector bundle $\Harm^+ \to\Met(X)$.
By (\ref{eq : two sections}), via this trivialization, the section $f_{\Harm^+} : T \to \Harm^+$ corresponds to the section
\[
T \to T \times (V^+)^\ast\ ;\ t \mapsto (t, ( v \mapsto 2 \pi i [p_{+_t}(c)] \cdot [\varphi^{-1}_{g_t}(v)]))
\]
of the trivial bundle $T \times (V^+)^\ast \to T$.
This is the restriction of the section $\Met(X) \to \Met(X) \times (V^+)^\ast$ obtained from (\ref{section}) up to constant.
For $\vec{R} = (R_1,\ldots,R_k) \in [0,\infty)^k$, we write $\mathcal{P}(\vec{R})$ for
\[
\mathcal{P}(\vec{R}) := \bigcap_{i=1}^k \Set{\varphi\in \mathcal{P} | \left<\varphi,v_i\right>\leq R_i}.
\]
Note that there exists a continuous map from the $b^+$-dimensional disk $D^{b^+}$ to $\mathcal{P}(\vec{R})$ which induces an isomorphism $H^{b^+}(\mathcal{P}(\vec{R}), \partial(\mathcal{P}(\vec{R}));\Z) \simeq H^{b^+}(D^{b^+}, S^{b^+-1};\Z)$ if $\Si_1, \ldots, \Si_k$ and $c$ satisfy Condition~\ref{main condition} and all of $R_1, \ldots, R_k$ are positive.

\begin{prop}
Assume that $b_1(X) = 0,\ c^2 > \sign(X)$, and in addition, $\Si_1, \ldots, \Si_k$ and $c$ satisfy Condition~\ref{main condition}.
Then for sufficiently large $R_0 > 0$, there exits a metric $g \in \partial(\iota(\mathcal{P}(\vec{R}_0)))$ such that there exists a solution of the \SW equations with respect to the metric $g$ (and the spin c structure corresponding to $c$), where $\vec{R}_0 := (R_0, \ldots, R_0)$.
\label{existence}
\end{prop}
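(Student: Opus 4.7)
The plan is to argue by contradiction, using the family virtual neighborhood machinery of Lemma~\ref{virtual nbd} to derive a vanishing of a relative Euler number that contradicts the nontriviality supplied by Lemma~\ref{topological}.

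For the setup, fix $R_0>0$ large (how large will be pinned down by Lemma~\ref{stretch}). The paragraph preceding the proposition gives a continuous map $\phi : (D^{b^+}, S^{b^+-1}) \to (\mathcal{P}(\vec{R}_0), \partial \mathcal{P}(\vec{R}_0))$ inducing an isomorphism on $H^{b^+}$, so I take $T := D^{b^+}$, $T' := \partial T$, and $\Psi := \iota \circ \phi$, so that Lemma~\ref{virtual nbd}(2) becomes applicable to an oriented compact manifold with boundary. Suppose for contradiction that no \SW solution exists for any $g \in \partial(\iota(\mathcal{P}(\vec{R}_0)))$.

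Then I would verify Condition~\ref{compactness} for $(T, T', \Psi)$: compactness of $s^{-1}(0)$ is automatic from compactness of $T$ (by the first part of the Remark following Lemma~\ref{virtual nbd}); compactness of $f_{\Harm^+}^{-1}(0)$ is trivial; $s^{-1}(0)|_{T'}$ is empty by the contradiction hypothesis; and $f_{\Harm^+}$ is nowhere zero on $T'$ provided $R_0$ is large enough. This last point is the heart of the setup: via the trivialization (\ref{trivialization}) the section $f_{\Harm^+}$ corresponds, up to the nonzero scalar $2\pi i$, to $\mathcal{F} \circ \phi$, and on any boundary stratum of $\mathcal{P}(\vec{R}_0)$ some coordinate $\left<\varphi, v_i\right>$ equals $R_0$, so Lemma~\ref{stretch} places $\mathcal{F}(\varphi)$ within an arbitrarily small neighborhood of the nonzero hyperplane $\Hyp_{\alpha_i}$ once $R_0$ is taken large enough.

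With Condition~\ref{compactness} in hand, Lemma~\ref{virtual nbd}(2) gives $\left<e(\Harm^+ \to T, f_{\Harm^+}), [T]\right> = 0$. On the other hand, the same trivialization identifies this pairing, up to a nonzero constant, with the relative mapping degree of $\mathcal{F}\circ\phi : (D^{b^+}, S^{b^+-1}) \to ((V^+)^\ast, (V^+)^\ast\setminus\{0\})$. Since $\phi$ realizes the generator of $H^{b^+}_c(\mathcal{P};\Z)$ from Condition~\ref{main condition}(i), and $\mathcal{F}\circ\phi$ is a small perturbation of a map satisfying (\ref{mapstoplane}) with $R_i = R_0$, Condition~\ref{main condition}(ii) together with Lemma~\ref{topological} force this degree to be nonzero. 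This contradiction proves the proposition.

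The main obstacle is not any single hard computation, but making precise the two identifications above: that the compact-support mapping degree of $\mathcal{F}$ on $\mathcal{P}$ supplied by Lemma~\ref{topological} corresponds, via $\phi$, to the relative Euler number $\left<e(\Harm^+, f_{\Harm^+}), [T]\right>$ on the disk model, and that $R_0$ can be chosen uniformly large so that Lemma~\ref{stretch} keeps $f_{\Harm^+}$ away from zero on all boundary strata simultaneously (there are $k$ of them, and each requires its own threshold). Once these two points are pinned down, the remainder of the argument is a direct assembly of the machinery already developed in this section.
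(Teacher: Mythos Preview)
Your proposal is correct and follows essentially the same approach as the paper: set $T=D^{b^+}$ with $\Psi$ the composition $\iota\circ\phi$, use the contrapositive of Lemma~\ref{virtual nbd}(2) against the nonvanishing Euler number supplied by Lemma~\ref{topological}. The paper's proof is simply terser---it invokes Lemma~\ref{topological} directly for both the nowhere-vanishing of $f_{\Harm^+}$ on $S^{b^+-1}$ and the nonzero degree, rather than re-deriving the boundary nonvanishing from Lemma~\ref{stretch} as you do---but the logic is identical.
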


\begin{proof}
In the setting of Lemma~\ref{virtual nbd}, let $T = D^{b^+}$ and $\Psi$ be a continuous map $D^{b^+} \to \iota(\mathcal{P}(\vec{R}_0))$ which induces an isomorphism between the relative cohomology groups.
By Lemma~\ref{topological}, $f_{\Harm^+}$ is nowhere vanishing on $S^{b^+-1}$ and we have
\[
\left < e\left(\Harm^+ \to D^{b^+} ,f_{\Harm^+}\right), [D^{b^+}] \right> \neq 0,
\]
where $[D^{b^+}]$ is the fundamental class $[D^{b^+}]\in H_{b^+}(D^{b^+} , S^{b^+-1}; \Z)$.
Hence the parametrized moduli space on $S^{b^+-1}$ is not empty by Lemma~\ref{virtual nbd} (2).
\end{proof}

\begin{rem}
Nakamura~\cite{MR2015245} used a family version of the \SW equations and information coming from reducible solutions to study diffeomorphisms on $4$-manifold.
Kronheimer-Mrowka-Ozsv\'{a}th-Szab\'{o}~\cite{MR2299739} used a 2-parameter family of \SW equations to consider an exact triangle for monopole Floer homology.
Ruberman~\cite{MR1734421}, \cite{MR1734421} and \cite{MR1874146} used 1-parameter version Donaldson/\SW invariants and the wall crossing phenomena to study diffeomorphisms and metrics with positive scalar curvature on $4$-manifold.
\end{rem}

The following proposition will be proved in the next section.

\begin{prop}
Let $\Si_1,\ldots,\Si_l$ be the same one in Lemma~\ref{stretch}.
Suppose that there exists a constant $C_0 > 0$ depending on only the initial metric $g_0$ and $c$,  there exist non-negative numbers $R_1, \ldots, R_l\geq0$ satisfying $R_1+\cdots+R_l>C_0/2$ and there exists a solution of the \SW equations with respect to a metric $g_{(R_1,\ldots,R_l)}$ stretched in the neighborhood of each $\Si_i$ by the length $R_i$.
Then
\begin{align}
\chi^-(\Si_i) \geq |c \cdot \alpha_i|
\label{pre-adj. for several}
\end{align}
holds for at least one $i \in \{1,\ldots,l\}$.
(The constant $C_0$ will be given concretely in (\ref{pre-adjunction}) in Lemma~\ref{pre-adjunction lemma}.)
\label{analysis}
\end{prop}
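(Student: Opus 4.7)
My plan is to argue by contradiction, assuming that $\chi^{-}(\Si_i) < |c \cdot \alpha_i|$ holds for \emph{every} $i \in \{1,\ldots,l\}$, and to derive a quantitative contradiction that grows linearly in $R_1+\cdots+R_l$.

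First, I would apply the standard Weitzenb\"ock identity and maximum principle to a solution $(A,\Phi)$ of the \SW equations with respect to $g_{(R_1,\ldots,R_l)}$, obtaining a pointwise bound on $|\Phi|$ (hence on $\sigma(\Phi)$) and an $L^{2}$ bound on $F_A^{+} = \sigma(\Phi)$ on $X$. Since the stretched portion of $g_{(R_1,\ldots,R_l)}$ is a product metric whose pointwise geometry is independent of $R_i$, these a priori bounds are uniform in the stretching parameters, and in particular yield an upper bound $C$ on a suitable global energy of the solution which does not depend on $R_1,\ldots,R_l$.

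Next, I would analyse the solution on each neck $[0,R_i]\times S^{1}\times\Si_i$ slice by slice. The product structure together with Chern-Weil gives $\int_{\{t\}\times S^{1}\times\Si_i} F_A = -2\pi i\,(c\cdot\alpha_i)$ for every $t$. The pre-adjunction Lemma~\ref{pre-adjunction lemma} (a two-dimensional, vortex-type inequality comparing this curvature integral with the Euler characteristic of $\Si_i$) then converts the contradictory hypothesis $\chi^{-}(\Si_i) < |c\cdot\alpha_i|$ into a strictly positive constant $c_i > 0$ bounding the same local energy density from below, uniformly in $t$ along the neck. Integrating over $t\in[0,R_i]$ and summing the neck contributions over $i$ produces a lower bound of order $(\min_i c_i)(R_1+\cdots+R_l)$ on the global energy.

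With $C_0$ chosen of order $C/\min_i c_i$, the hypothesis $R_1+\cdots+R_l > C_0/2$ then forces this lower bound to exceed the uniform upper bound $C$ from the first step, the desired contradiction. The main obstacle, and the content of Lemma~\ref{pre-adjunction lemma} to be proved in the next section, lies in the two-dimensional adjunction estimate on a slice: extracting, from the four-dimensional \SW inequality and the product structure of the metric on the neck, a clean slice-wise bound that encodes the adjunction formula on each $\Si_i$ independently and that behaves linearly under integration in the $[0,R_i]$ direction.
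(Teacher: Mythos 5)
Your overall strategy—argue by contradiction and beat a constant depending only on $g_0$ and $c$ by an estimate growing linearly in $R_1+\cdots+R_l$—is indeed the paper's, but the first step of your plan contains a genuine gap: there is no global energy of the solution that is bounded independently of $R_1,\ldots,R_l$ once some $\Si_i$ has genus at least two. On the stretched neck $[0,R_i]\times S^1\times\Si_i$ the metric is a product whose scalar curvature is the (negative) constant $4\pi\chi(\Si_i)$ in the normalization of Lemma~\ref{pre-adjunction lemma}, over a region of volume $R_i$. Hence $\int_X\kappa_{g_{(R_1,\ldots,R_l)}}^2$, and with it every natural a priori bound extracted from the Weitzenb\"ock formula and the maximum principle ($\int_X|\Phi|^4$, $\|F_A^{+}\|^2_{L^2}$, and $\|h_g(c)\|^2_{L^2}$ as in Lemma~\ref{L^2-a priori}), grows linearly in $R_i$ with slope proportional to $\chi^-(\Si_i)^2$. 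The pointwise bound $|\Phi|^2\leq\sup\kappa$ is uniform in the stretching parameters, as you say, but integrating it over a manifold whose volume grows linearly in $\sum_i R_i$ destroys the claimed $R$-independent constant $C$. So the intended contradiction ``linear lower bound exceeds constant upper bound'' cannot be run as stated: both sides grow linearly, and what must be compared is their growth rates. That is exactly what the paper's proof does: Lemma~\ref{L^2-a priori} applied to $g_{(R_1,\ldots,R_l)}$ gives the upper bound $\|h_g(c)\|^2_{L^2}\leq\sum_i R_i\,\chi^-(\Si_i)^2 + C_0$ (estimate (\ref{estimate2.5})), Lemma~\ref{cylinder} on the disjoint necks gives the lower bound $\sum_i R_i\,|c\cdot\alpha_i|^2\leq\|h_g(c)\|^2_{L^2}$ (estimates (\ref{estimate3}) and (\ref{estimate5})), and the contradiction hypothesis together with integrality (indeed parity, since $c$ is characteristic and $\chi(\Si_i)$ is even) gives a uniform per-unit-length gap $|c\cdot\alpha_i|^2-\chi^-(\Si_i)^2\geq 2$, whence $2(R_1+\cdots+R_l)\leq C_0$, the desired contradiction.

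A related mismatch: Lemma~\ref{pre-adjunction lemma} is not a slice-wise, vortex-type inequality holding ``uniformly in $t$''; it is a global statement for one neck obtained by combining the Cauchy--Schwarz estimate of Lemma~\ref{cylinder} with the $L^2$ bound of Lemma~\ref{L^2-a priori}, and it already contains the $C_0/R$ term. A slice-wise positive lower bound $c_i>0$ on an un-renormalized energy density cannot hold for the reason above; the correct per-slice comparison is between $|c\cdot\alpha_i|^2$ and the scalar-curvature allowance $\chi^-(\Si_i)^2$, i.e., a comparison of rates, not a positive lower bound against zero. Finally, note that your constant $C_0\sim C/\min_i c_i$ would a priori depend on the surfaces through the gaps $c_i$; the paper obtains a threshold depending only on $g_0$ and $c$ precisely because integrality makes the per-unit gap at least a universal constant, a point your argument would also need to invoke explicitly.
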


Then we can complete the proof of Theorem~\ref{general}.
Take sufficiently large $R_0 > 0$.
By Proposition~\ref{existence}, there exists a solution of the \SW equations with respect to a metric on $\partial(\iota(\mathcal{P}(\vec{R_0})))$.
Therefore there exist surfaces $\Si_{i_1}, \ldots, \Si_{i_l}\ (1 \leq l \leq b^+,\ \{i_1, \ldots, i_l\} \in\mathcal{S})$ and there exist numbers $R_{i_1}, \ldots, R_{i_l} \geq 0$ such that $R_{i_1} +\cdots + R_{i_l} > R_0/2$ and there exists a solution of the \SW equations with respect to a metric $g_{(R_{i_1}, \ldots, R_{i_l})}$ stretched in the neighborhood of $\Si_{i_1}, \ldots, \Si_{i_l}$ by the length $R_{i_1}, \ldots, R_{i_l}$ respectively.
Hence by Proposition~\ref{analysis}, the inequality holds for at least one of $\Si_{i_1}, \ldots, \Si_{i_l}$.
Since we assume that $c \cdot \alpha_i \neq 0$, if we have $\chi^-(\Si_i) \geq | c \cdot \alpha_i |$,then $\chi^-(\Si_i) = -\chi(\Si_i)$ holds.
This proves Theorem~\ref{general}.

\section{Analytical arguments}

In this section, we prove Lemma~\ref{stretch} and Proposition~\ref{analysis} in the previous section.
Let $X$ be an oriented closed smooth $4$-manifold and $\Si\subset X$ be an embedded surface with zero self-intersection number and set $\alpha : = [\Si]$.
We take an initial metric $g_0$ such that a neighborhood of the sphere bundle of the normal bundle of $\Si$ forms $[0,1] \times S^1 \times \Si =: [0,1] \times Y$ with the product metric.
We consider a metric $g_R$ stretched in the neighborhood of $\Si$ by the length $R>0$.
(In the following argument, it is not necessary that $g_R$ coincides with $g_0$ on the complement of the neighborhoods of $\Si$.)
Let $c \in H^2(X;\Z)$ be a characteristic.

The following Lemma~\ref{cylinder} is shown by Cauchy-Schwarz inequality and Fubini's theorem.

\begin{lem}(Kronheimer-Mrowka~\cite{MR1492131},\ Kronheimer~\cite{MR1664695})
For any closed 2-form $\omega$ on $X$,
\begin{align}
|\alpha \cdot [\omega]|^2=\left|\int_{\Si} \omega \right|^2\leq\frac{\|\omega\|^2_{L^2_{g_R}([0,R]\times Y)}\cdot\Area({\Si})}{\Area([0,R]\times S^1)}
\label{key estimate}
\end{align}
holds, where $\Area(\cdot)$ means the area with respect to $g_0$ and $L^2_g$ means $L^2$-norm with respect to the metric $g$.
\label{cylinder}
\end{lem}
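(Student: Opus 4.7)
The plan is to exploit the product structure on the stretched neck $[0,R]\times Y = [0,R]\times S^1\times \Si$ together with the fact that the self-intersection number of $\Si$ is zero. The equality $\alpha\cdot [\omega] = \int_\Si \omega$ is immediate from Poincar\'e duality / de Rham theory, so the content is the inequality.

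First I would observe that for each $(t,\theta)\in [0,R]\times S^1$ the ``translated'' surface $\Si_{t,\theta}:=\{t\}\times\{\theta\}\times\Si$ is homologous to $\Si$ in $X$: because $\Si^2=0$ its tubular neighborhood is trivial $D^2\times\Si$ and its boundary is the product $S^1\times \Si = Y$, so all parallel copies $\Si_{t,\theta}$ represent $\alpha\in H_2(X;\Z)$. Since $\omega$ is closed, this gives
\[
\int_{\Si_{t,\theta}}\omega = \int_\Si \omega\qquad\text{for every }(t,\theta)\in[0,R]\times S^1.
\]
Integrating over the base yields
\[
\Area([0,R]\times S^1)\cdot\int_\Si\omega
= \int_0^R\!\!\int_{S^1}\!\!\left(\int_{\Si_{t,\theta}}\omega\right) d\theta\,dt.
\]

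Next I would apply Cauchy-Schwarz to the base integral:
\[
\left|\int_0^R\!\!\int_{S^1}\!\!\int_{\Si_{t,\theta}}\!\!\omega\,d\theta\,dt\right|^{2}
\leq \Area([0,R]\times S^1)\cdot\int_0^R\!\!\int_{S^1}\!\!\left|\int_{\Si_{t,\theta}}\!\!\omega\right|^{2}\!d\theta\,dt,
\]
and then Cauchy-Schwarz fiberwise on each $\Si_{t,\theta}$: writing the pullback $\omega|_{\Si_{t,\theta}} = f_{t,\theta}\,\mathrm{vol}_{\Si}$, we get
\[
\left|\int_{\Si_{t,\theta}}\omega\right|^{2}\leq \Area(\Si)\cdot\int_{\Si_{t,\theta}}|f_{t,\theta}|^{2}\,\mathrm{vol}_{\Si}.
\]
Combining the two inequalities and dividing by $\Area([0,R]\times S^1)^2$ yields
\[
\left|\int_\Si\omega\right|^{2}\leq \frac{\Area(\Si)}{\Area([0,R]\times S^1)}\cdot\int_0^R\!\!\int_{S^1}\!\!\int_{\Si_{t,\theta}}|f_{t,\theta}|^{2}\,\mathrm{vol}_{\Si}\,d\theta\,dt.
\]

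The final step is to identify the triple integral with $\|\omega\|_{L^2_{g_R}([0,R]\times Y)}^{2}$. Because the metric on the neck is a product, the pointwise norm satisfies $|f_{t,\theta}(p)|^{2}\leq |\omega(t,\theta,p)|^{2}_{g_R}$ (the $d\xi^1\wedge d\xi^2$ component is only one of several orthogonal components of $\omega$). By Fubini, integrating this bound over $[0,R]\times S^1\times \Si$ with respect to the product volume form gives exactly $\|\omega\|^2_{L^2_{g_R}([0,R]\times Y)}$, which completes the inequality. The main (modest) obstacle is the bookkeeping in this last step: one must be careful that the $L^2$ norm on $[0,R]\times Y$ is taken with respect to $g_R$ while the area of $\Si$ is measured with respect to $g_0$, but since $g_R$ restricted to the neck is just a rescaling of $dt$ and agrees with $g_0$ in the $Y$-directions, the product volume factors correctly and no mismatch appears.
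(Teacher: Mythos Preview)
Your proof is correct and follows exactly the approach the paper indicates: the paper does not spell out a proof but simply states that the lemma ``is shown by Cauchy-Schwarz inequality and Fubini's theorem,'' citing Kronheimer--Mrowka and Kronheimer. Your argument---averaging over the parallel copies $\Si_{t,\theta}$, applying Cauchy--Schwarz on the base and then on the fiber, and using Fubini together with the pointwise bound $|f_{t,\theta}|^2 \le |\omega|_{g_R}^2$---is precisely the intended computation.
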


\begin{lem}
For a metric $g$, we define $\omega_g := \varphi_g^{-1}(p_{V^+}(\alpha)) \in \Harm^{+_g}(X)$, then
\begin{align*}
|\alpha - [\omega_{g_R}]|^4 \leq \frac{C\cdot\Area({\Si})}{\Area([0,R]\times S^1)}
\end{align*}
holds for a positive number $R > 0$.
Therefore we have
\[
\lim_{R \to \infty}[\omega_{g_{R}}] = \alpha.
\]
Here the constant $C > 0$ depends only on the homology class $\alpha \in H_2(X ; \R)$ and the fixed decomposition $H^2(X ; \R) = V^+ \oplus  V^-$.
(In particular, $C$ is independent of the positive number $R$ and the choice of the metric $g_R$.)
\label{close}
\end{lem}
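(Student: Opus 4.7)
The plan is to combine Lemma~\ref{cylinder} with the observation that, by construction, $[\omega_{g_R}]-\alpha$ lies in $V^-$, and then to bound $\|\omega_{g_R}\|^2_{L^2_{g_R}}$ by a quantity that depends only on $\alpha$ and the fixed decomposition $H^2(X;\R)=V^+\oplus V^-$.

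First I would note that $\varphi_{g_R}(\omega_{g_R})=p_{V^+}(\alpha)$ means $p_{V^+}([\omega_{g_R}])=p_{V^+}(\alpha)$, so $\beta:=[\omega_{g_R}]-\alpha$ belongs to $V^-$. Using $\alpha^2=0$ and that the Euclidean norm on $V^-$ is $-Q$, a direct expansion yields
\[
|\alpha-[\omega_{g_R}]|^2=-\beta\cdot\beta=2\,\alpha\cdot[\omega_{g_R}]-[\omega_{g_R}]^2.
\]
Since $\omega_{g_R}$ is self-dual harmonic, $[\omega_{g_R}]^2=\|\omega_{g_R}\|^2_{L^2_{g_R}(X)}\geq0$; combined with the non-negativity of the left hand side this forces $\alpha\cdot[\omega_{g_R}]\geq0$ and gives
\[
|\alpha-[\omega_{g_R}]|^2\leq 2|\alpha\cdot[\omega_{g_R}]|.
\]

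Next I would apply Lemma~\ref{cylinder} to $\omega_{g_R}$ (bounding $\|\omega_{g_R}\|^2_{L^2_{g_R}([0,R]\times Y)}$ by $\|\omega_{g_R}\|^2_{L^2_{g_R}(X)}$) to get $|\alpha\cdot[\omega_{g_R}]|^2\leq \|\omega_{g_R}\|^2_{L^2_{g_R}(X)}\cdot\Area(\Si)/\Area([0,R]\times S^1)$, and then control the $L^2$-norm uniformly in $R$. Writing $[\omega_{g_R}]=p_{V^+}(\alpha)+\gamma$ with $\gamma\in V^-$, the $Q$-orthogonality of $V^+$ and $V^-$ together with $\gamma^2\leq0$ yields
\[
\|\omega_{g_R}\|^2_{L^2_{g_R}(X)}=[\omega_{g_R}]^2=p_{V^+}(\alpha)^2+\gamma^2\leq p_{V^+}(\alpha)^2,
\]
a constant depending only on $\alpha$ and the chosen decomposition. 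Squaring the previous inequality and substituting produces $|\alpha-[\omega_{g_R}]|^4\leq 4p_{V^+}(\alpha)^2\cdot\Area(\Si)/\Area([0,R]\times S^1)$, giving the claim with $C:=4p_{V^+}(\alpha)^2$.

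There is no serious obstacle here: once Lemma~\ref{cylinder} is in hand, the argument is purely intersection-theoretic. The one point worth emphasising is why $\|\omega_{g_R}\|^2_{L^2_{g_R}(X)}$ is bounded independently of $R$ and of the specific way in which $g_R$ is stretched, and this is exactly where the hypothesis $\alpha^2=0$ together with the prescribed $V^+$-projection of $[\omega_{g_R}]$ becomes essential: a fluctuation of $\omega_{g_R}$ into the negative definite part can only decrease $[\omega_{g_R}]^2$, which pins $\|\omega_{g_R}\|^2_{L^2_{g_R}(X)}$ below the $R$-independent number $p_{V^+}(\alpha)^2$.
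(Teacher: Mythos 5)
Your proposal is correct and follows essentially the same route as the paper: both rest on Lemma~\ref{cylinder}, on the observation that $p_{V^+}([\omega_{g_R}])=p_{V^+}(\alpha)$ so that $\alpha-[\omega_{g_R}]\in V^-$, on $\alpha^2=0$ together with $[\omega_{g_R}]^2=\|\omega_{g_R}\|^2_{L^2_{g_R}(X)}\geq 0$ to get $|\alpha-[\omega_{g_R}]|^2\leq 2|\alpha\cdot[\omega_{g_R}]|$, and on the $R$-independent bound $\|\omega_{g_R}\|^2_{L^2_{g_R}(X)}\leq |p_{V^+}(\alpha)|^2$. Your direct expansion of $(\,[\omega_{g_R}]-\alpha)^2$ is just a rearrangement of the paper's identity, and you arrive at the same constant $C=4\,|p_{V^+}(\alpha)|^2$.
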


\begin{proof}
Note that for any metric $g$, $\|\omega_g\|_{L^2_g(X)} \leq |p_{V^+}(\alpha)|^2 =: C_0$ holds.
By Lemma~\ref{cylinder}, we have
\begin{align}
|\alpha \cdot [\omega_{g_R}]|^2 = \left|\int_{{\Si}}\omega_{g_R}\right|^2 \leq \frac{C_0\cdot\Area({\Si})}{\Area([0,R]\times S^1)}.
\label{key-estimate}
\end{align}
Consider the decomposition $\alpha = \beta + \gamma,\ [\omega_{g_R}] = \beta + \delta\ (\beta\in V^+,\ \gamma,\delta\in V^-)$.
Since we have $-2(\beta^2 + \gamma\delta) = (\gamma - \delta)^2 - (\beta + \delta)^2$ and the both terms of the right-hand side are negative, we obtain $|2(\beta^2+\gamma\delta)| \geq |(\gamma - \delta)^2| = |\gamma - \delta|^2$, i.e., $2|\alpha\cdot[\omega_{g_R}]| \geq |\alpha - [\omega_{g_R}]|^2$.
By putting together (\ref{key-estimate}), we have
\[
|\alpha - [\omega_{g_R}]|^2
 \leq 2 \left(\frac{C_0 \cdot \Area({\Si})}{\Area([0,R] \times S^1)}\right)^{1/2}.
\]
\end{proof}

\begin{proof}[Proof of Lemma~\ref{stretch}]
It is clear since in Lemma~\ref{close} the constant $C$ does not depend on the choice of the metric $g_R$.
\end{proof}

The following Lemma~\ref{L^2-a priori} is shown by using the Weitzenb\"{o}ck formula.

\begin{lem}(Kronheimer-Mrowka~\cite{MR1492131},\ Kronheimer~\cite{MR1664695})
Let $s_g : X \to \R$ be the scalar curvature for a metric $g$.
We define $\kappa_g : X \to \R$ by
\[
\kappa_g := \max\{ -s_g, 0 \}.
\]
If there exists a solution of the \SW equations with respect to the metric $g$ and the spin c structure corresponding to $c$, then
\begin{align}
\|h_g(c)\|_{L^2_g(X)}^2
\leq \frac{\| \kappa_g \|_{L^2_g(X)}^2}{(4\pi)^2} - \int_X c^2.
\label{char-estimate}
\end{align}
\label{L^2-a priori}
\end{lem}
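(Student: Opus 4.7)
The plan is the standard Weitzenböck argument of Kronheimer-Mrowka. Let $(A,\Phi)$ be a solution of the \SW equations with respect to $g$, so $F_A^{+_g}=\sigma(\Phi)$ and $D_{g,A}\Phi=0$. The Weitzenböck formula reads
\[
D_{g,A}^{\ast}D_{g,A}\Phi \;=\; \nabla_{A}^{\ast}\nabla_{A}\Phi + \frac{s_g}{4}\Phi + \frac{1}{2}F_{A}^{+_g}\cdot\Phi,
\]
and on the solution one has $\sigma(\Phi)\cdot\Phi=\tfrac{1}{2}|\Phi|^2\Phi$. Pairing with $\Phi$ and integrating yields the standard identity
\[
\int_{X}\Bigl(|\nabla_{A}\Phi|^2 + \frac{s_g}{4}|\Phi|^2 + \frac{|\Phi|^4}{4}\Bigr) = 0.
\]
Discarding the nonnegative gradient term, bounding $-s_g\le\kappa_g$, and applying Cauchy-Schwarz to $\int_X\kappa_g|\Phi|^2$ gives $\int_X |\Phi|^4 \le \|\kappa_g\|_{L^2_g(X)}^2$.

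With the Clifford-action conventions used here, $|F_A^{+_g}|^2=\tfrac{1}{8}|\Phi|^4$ pointwise, so $\|F_A^{+_g}\|_{L^2_g(X)}^2\le\tfrac{1}{8}\|\kappa_g\|_{L^2_g(X)}^2$. Set $\omega_A:=\tfrac{i}{2\pi}F_A$; this is a closed 2-form representing $c\in H^2(X;\R)$, and $\|\omega_A^{+_g}\|_{L^2_g(X)}^2 = \|F_A^{+_g}\|_{L^2_g(X)}^2/(4\pi^2)$. Hodge-decompose $\omega_A=h_g(c)+d\eta$ for some 1-form $\eta$; since the self-dual part of an exact form is $L^2_g$-orthogonal to the self-dual harmonic part, we conclude
\[
\|p_{+_g}(c)\|_{L^2_g(X)}^2 \;\le\; \|\omega_A^{+_g}\|_{L^2_g(X)}^2 \;\le\; \frac{\|\kappa_g\|_{L^2_g(X)}^2}{32\pi^2}.
\]

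Finally, the topological identity $\int_X c^2 = \|p_{+_g}(c)\|_{L^2_g(X)}^2 - \|h_g(c)-p_{+_g}(c)\|_{L^2_g(X)}^2$ rearranges to $\|h_g(c)\|_{L^2_g(X)}^2 = 2\|p_{+_g}(c)\|_{L^2_g(X)}^2 - \int_X c^2$, and substituting the previous bound gives the inequality (\ref{char-estimate}). The only step that needs care is the normalization constant relating $|F_A^{+_g}|^2$ to $|\Phi|^4$, which depends on the convention for Clifford multiplication and the identification of $i\Lambda^{+_g}$ with traceless skew-Hermitian endomorphisms of $S^+$; once that convention is fixed, the remainder is purely formal manipulation of the Weitzenböck identity and the Hodge decomposition.
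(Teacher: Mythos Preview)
Your argument is correct and is precisely the standard Weitzenb\"ock computation the paper has in mind; indeed the paper does not give its own proof but simply attributes the lemma to Kronheimer--Mrowka and Kronheimer with the remark that it ``is shown by using the Weitzenb\"{o}ck formula.'' Your normalization $|F_A^{+_g}|^2=\tfrac{1}{8}|\Phi|^4$ is exactly the one that reproduces the constant $(4\pi)^2$ in the stated inequality, and your orthogonality step $p_{+_g}(c)\perp d^{+_g}\eta$ is justified since $p_{+_g}(c)$ is harmonic (hence coclosed) and self-dual.
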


\begin{lem}(cf. Kronheimer-Mrowka~\cite{MR1492131}, Kronheimer~\cite{MR1664695})
Normalize the initial metric $g_0$ to be on $\Si$ of constant scalar curvature and of unit area, and in addition, of unit length for $S^1$ in $[0,1]\times S^1\times\Si$.
If there exists a solution of the \SW equations with respect to the metric $g_R$ and $c$, then
\begin{align}
\chi^-(\Si)^2 + \frac{C_0}{{R}}
\geq | c \cdot \alpha |^2,\ {\rm where\ } C_0 = \frac{\| \kappa_{g_0} \|_{L^2_{g_0}(X)}^2}{(4\pi)^2} - \int_X c^2.
\label{pre-adjunction}
\end{align}
(Note that $C_0$ depends on only $g_0$ and it can be positive, negative and zero.)

Thus if there exists a positive number $R$ such that $R > |C_0|/2$ and there exists a solution of the \SW equations with respect to the metric $g_R$, we have
\[
\chi^-(\Si)
\geq | c \cdot \alpha |.
\]
\label{pre-adjunction lemma}
\end{lem}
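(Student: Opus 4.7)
The plan is to combine Lemma~\ref{L^2-a priori} (an $L^2$ bound on the harmonic representative of $c$ coming from the existence of an \SW monopole) with Lemma~\ref{cylinder} (a geometric control on $|c\cdot\alpha|$ obtained by integrating a closed $2$-form over the long cylinder). The key arithmetic is that stretching the tube by a factor $R$ inflates $\|\kappa_{g_R}\|^2_{L^2_{g_R}}$ linearly in $R$, while the tube area $\Area([0,R]\times S^1)=R$ appearing in Lemma~\ref{cylinder} supplies a canceling factor of $1/R$. The two effects balance, leaving a bound on $|c\cdot\alpha|^2$ that is $R$-independent up to an $O(1/R)$ error, and that error turns out to be exactly $C_0/R$.

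First I would compute the scalar curvature of $g_R$ on the tube $[0,R]\times S^1\times\Si$. The normalization forces $\Si$ to carry a constant scalar curvature metric of unit area, so Gauss--Bonnet gives $s_{g_\Si}=4\pi\chi(\Si)$. Because the $[0,R]$ and $S^1$ factors are flat, the scalar curvature of $g_R$ on the entire tube equals the constant $4\pi\chi(\Si)$, whence $\kappa_{g_R}\equiv 4\pi\chi^-(\Si)$ there. As $g_R$ agrees with $g_0$ off the tube and the tube has $4$-volume $R$, a direct computation (in which the $g_0$-contribution of $[0,1]\times Y$ cancels $\chi^-(\Si)^2$) yields
\[
\frac{\|\kappa_{g_R}\|^2_{L^2_{g_R}(X)}}{(4\pi)^2}=\chi^-(\Si)^2(R-1)+\frac{\|\kappa_{g_0}\|^2_{L^2_{g_0}(X)}}{(4\pi)^2}.
\]
Lemma~\ref{L^2-a priori} then gives $\|h_{g_R}(c)\|^2_{L^2_{g_R}(X)}\leq \chi^-(\Si)^2(R-1)+C_0$.

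Next I would apply Lemma~\ref{cylinder} to $\omega=h_{g_R}(c)$: using $\Area(\Si)=1$ and $\Area([0,R]\times S^1)=R$,
\[
|c\cdot\alpha|^2\leq \frac{\|\omega\|^2_{L^2_{g_R}([0,R]\times Y)}}{R}\leq \frac{\|\omega\|^2_{L^2_{g_R}(X)}}{R}\leq \chi^-(\Si)^2\cdot\frac{R-1}{R}+\frac{C_0}{R}\leq \chi^-(\Si)^2+\frac{C_0}{R},
\]
which is (\ref{pre-adjunction}).

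To upgrade (\ref{pre-adjunction}) to $\chi^-(\Si)\geq |c\cdot\alpha|$ under the hypothesis $R>|C_0|/2$, I would use a parity argument. Since $c$ is characteristic and $\alpha^2=0$, we have $c\cdot\alpha\equiv\alpha^2\equiv 0\pmod 2$, and $\chi^-(\Si)\in 2\Z_{\geq 0}$; thus $|c\cdot\alpha|^2$ and $\chi^-(\Si)^2$ are both multiples of $4$. If $C_0\leq 0$, then (\ref{pre-adjunction}) immediately yields $\chi^-(\Si)^2\geq |c\cdot\alpha|^2$. If $C_0>0$, then $R>C_0/2$ gives $C_0/R<2$, so $|c\cdot\alpha|^2-\chi^-(\Si)^2<2$, and divisibility by $4$ forces this difference to be nonpositive. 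Taking square roots finishes the argument. The main point to get right is the scalar curvature computation and the bookkeeping of volumes in the stretched metric so that the linear-in-$R$ growth of $\|\kappa_{g_R}\|^2$ and the $1/R$ factor from $\Area([0,R]\times S^1)=R$ combine precisely into (\ref{pre-adjunction}); once that is in place, the two cited lemmas together with the parity observation close the proof mechanically.
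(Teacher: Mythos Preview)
Your proof is correct and is exactly the approach the paper intends: apply Lemma~\ref{cylinder} to $\omega = h_{g_R}(c)$ and bound $\|h_{g_R}(c)\|_{L^2_{g_R}(X)}^2$ via Lemma~\ref{L^2-a priori}, computing $\|\kappa_{g_R}\|_{L^2}^2$ from the constant scalar curvature $4\pi\chi(\Si)$ on the product tube. The paper's own proof is the single sentence ``It follows immediately from Lemma~\ref{cylinder} and Lemma~\ref{L^2-a priori}'', and the same bookkeeping you wrote out reappears verbatim in the paper's proof of Proposition~\ref{analysis}. Your parity observation ($c\cdot\alpha$ and $\chi^-(\Si)$ both even, hence the squares differ by a multiple of~$4$) is what makes the threshold $R>|C_0|/2$ rather than $R>|C_0|$ suffice; the paper leaves this step implicit.
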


\begin{proof}
It follows immediately from Lemma~\ref{cylinder} and Lemma~\ref{L^2-a priori}.
\end{proof}

\begin{proof}[Proof of Proposition~\ref{analysis}]
For simplicity, we give the proof only in the case of $l = 2$.
Let $Y_i := S^1 \times \Si_i$.
Assume that the inequality (\ref{pre-adj. for several}) does not hold for both $i = 1, 2$.
Then, since $\chi^-(\Si_i)^2 + 1 \leq | c \cdot \alpha_i |^2$ holds, taking weighted average, we have
\begin{align}
{R}_1 \chi^-(\Si_1)^2 + {R}_2 \chi^-(\Si_2)^2 + 2(R_1 + R_2)
\leq {R}_1 |c \cdot \alpha_1 |^2 + {R}_2 | c \cdot \alpha_2|^2.
\label{estimate0}
\end{align}

On the other hand,
\begin{align}
\| h_{g_{(R_1, R_2)}}(c) \|_{L_{g_{(R_1, R_2)}}^2(X)}^2
\leq {R}_1 \chi^-(\Si_1)^2 + {R}_2 \chi^-(\Si_2)^2 + \frac{\| \kappa_{g_0} \|^2_{L^2_{g_0}(X)}}{(4\pi)^2} - \int _X c^2,
\label{estimate2.5}
\end{align}
holds by Lemma~\ref{L^2-a priori}.
We also have
\begin{align}
| c \cdot \alpha_i |^2
\leq \frac{\| h_{g_{(R_1, R_2)}}(c) \|^2_{L^2_{g_{(R_1, R_2)}}([-R_i, R_i] \times Y_i)}}{{R}_i}\ (i = 1, 2)
\label{estimate3}
\end{align}
by Lemma~\ref{cylinder}.
Taking weighted average for (\ref{estimate3}), we obtain
\begin{align}
&{R}_1 | c \cdot \alpha_1 |^2 + {R}_2 | c \cdot \alpha_2|^2 \nonumber \\
\leq & \| h_{g_{(R_1, R_2)}}(c) \|^2_{L^2_{g_{(R_1, R_2)}}( [-R_1, R_1] \times Y_1 )} + \| h_{g_{(R_1, R_2)}}(c) \|^2_{L^2_{g_{(R_1, R_2)}}([-R_2, R_2] \times Y_2)} \nonumber \\
\leq & \| h_{g_{(R_1, R_2)}}(c) \|^2_{L^2_{g_{(R_1, R_2)}}(X)}
\label{estimate5}.
\end{align}
By putting together (\ref{estimate0}), (\ref{estimate2.5}) and (\ref{estimate5}),
\begin{align*}
&{R}_1 \chi^-(\Si_1)^2 + {R}_2 \chi^-(\Si_2)^2 + 2 (R_1 + R_2)\nonumber\\
\leq&{R}_1 \chi^-(\Si_1)^2 + {R}_2 \chi^-(\Si_2)^2
+ \frac{\| \kappa_{g_0} \|^2_{L^2_{g_0}(X)}}{(4\pi)^2} - \int _X c^2
\end{align*}
holds.
Therefore we obtain
\begin{align*}
2 (R_1 + R_2)
\leq \frac{\| \kappa_{g_0} \|^2_{L^2_{g_0}(X)}}{(4\pi)^2} - \int _X c^2.
\end{align*}
This contradicts our assumption.
\hspace{\fill}
\end{proof}

\begin{rem}
We can also prove Proposition~\ref{analysis} by the original argument in Kronheimer-Mrowka~\cite{MR1306022}; the method using Chern-Simons-Dirac functional (\cite{MR1306022},\ Proposition 8, Lemma 9).
Using the argument, one can show easily the following proposition.
Let $l > 0$ and $\Si_1,\ldots,\Si_l$ and $g_{(R_1, \ldots, R_l)}$ be the same one in Proposition~\ref{analysis}.

\begin{prop}
Assume that for any sufficiently large $R > 0$ there exist positive numbers $R_1, \ldots, R_l > 0$ such that $\min\{R_1, \ldots, R_l\} > R$ and there exists a solution of the \SW equations with respect to the metric $g_{(R_1, \ldots, R_l)}$. 
Then for each $i \in \{ 1, \ldots, l \}$ there exists a solution on $\R \times Y_i$ which is translation-invariant in temporal gauge.
\label{CSD}
\end{prop}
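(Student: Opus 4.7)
The plan is to carry out the standard neck-stretching limit argument, controlled by the Chern--Simons--Dirac functional, as suggested in the remark and following Kronheimer--Mrowka~\cite{MR1306022}. Fix $i \in \{1,\ldots, l\}$. By hypothesis, for a sequence $R_n \to \infty$ we obtain vectors $\vec{R}_n = (R_{n,1}, \ldots, R_{n,l})$ with $\min_j R_{n,j} > R_n$ and \SW solutions $(A_n, \Phi_n)$ on $(X, g_{\vec{R}_n})$. By the construction of $g_{\vec{R}_n}$, the neck $N_{n,i} := [-R_{n,i}, R_{n,i}] \times Y_i$ carries the product metric isometrically, so the restriction of $(A_n, \Phi_n)$ satisfies the cylindrical \SW equations on $N_{n,i}$. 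After applying a gauge transformation I may put this restriction in temporal gauge, producing a path $\gamma_n(t) = (B_n(t), \Psi_n(t))$ in the configuration space on $Y_i$.

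Next I would establish a uniform energy bound. In temporal gauge the cylindrical \SW equations are the downward gradient flow of the Chern--Simons--Dirac functional $\mathrm{CSD}_i$ on $Y_i$, giving
\[
\int_{-R_{n,i}}^{R_{n,i}} \| \dot{\gamma}_n(t) \|^2 \, dt \;=\; \mathrm{CSD}_i(\gamma_n(-R_{n,i})) - \mathrm{CSD}_i(\gamma_n(R_{n,i})).
\]
Combining the global $L^2$ bound on $F_{A_n}^{+}$ from Lemma~\ref{L^2-a priori} with the pointwise spinor bound $|\Phi_n|^2 \leq \kappa_{g_{\vec R_n}}$ from the Weitzenb\"{o}ck argument, and using that the scalar curvature on each neck is independent of $n$, the right-hand side is bounded by a constant $E$ independent of $n$.

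For the limit step, fix any $L > 0$. Since the total energy on $N_{n,i}$ is at most $E$ while its length is at least $2R_n$, a pigeonhole argument produces times $t_{n,i} \in [-R_{n,i} + L,\, R_{n,i} - L]$ with
\[
\int_{t_{n,i} - L}^{t_{n,i} + L} \| \dot{\gamma}_n(t) \|^2 \, dt \;\leq\; \frac{E L}{R_n} \;\longrightarrow\; 0.
\]
I would translate the solutions by $t_{n,i}$ and apply Uhlenbeck-type compactness for cylindrical \SW trajectories with uniformly bounded energy and uniform $L^\infty$ spinor bound, bootstrapping the elliptic estimates in temporal gauge together with a Coulomb slice on each $\{t\} \times Y_i$. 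This yields, after passing to a subsequence and further gauge transformations, a $C^\infty$-limit $(A_\infty^{(L)}, \Phi_\infty^{(L)})$ on $[-L, L] \times Y_i$ that solves \SW, remains in temporal gauge, and satisfies $\int_{-L}^{L} \| \dot{\gamma}_\infty^{(L)} \|^2 \, dt = 0$; hence $\gamma_\infty^{(L)}$ is constant in $t$. A diagonal subsequence over $L \nearrow \infty$ then produces a translation-invariant \SW solution on all of $\R \times Y_i$, as required.

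I expect the main obstacle to be the gauge-theoretic compactness step: verifying that the uniform energy bound together with the pointwise spinor estimate really do permit extraction of a $C^\infty$-convergent subsequence modulo gauge, rather than merely a weak limit, and that the temporal-gauge normalization survives the gauge-fixing on each time slice. This is by now well-documented in the monopole Floer literature, but the interaction between the temporal gauge on the cylinder and the slice-wise Coulomb gauge has to be handled carefully to promote weak $L^2_k$-convergence to the smooth convergence needed to conclude vanishing of $\dot{\gamma}_\infty^{(L)}$.
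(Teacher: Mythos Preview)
The paper does not supply its own proof of this proposition; it simply defers to the argument of Kronheimer--Mrowka~\cite{MR1306022}, Proposition~8, which is exactly the neck-stretching/CSD argument you have sketched. So your approach coincides with the paper's intended one.

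One point does need tightening. You assert that the CSD drop over the $i$-th neck is bounded by a constant $E$ independent of $n$, invoking Lemma~\ref{L^2-a priori} and the pointwise spinor bound. But Lemma~\ref{L^2-a priori} bounds $\|h_g(c)\|_{L^2}$, not $\|F_A^+\|_{L^2}$ or the CSD drop, and in any case its right-hand side contains $\|\kappa_{g_{\vec R_n}}\|_{L^2}^2$, which grows linearly in the neck lengths; the naive energy estimate over a neck of length $2R_{n,i}$ is therefore $O(R_{n,i})$, which would make your pigeonhole quantity $EL/R_n$ merely bounded rather than tending to zero. The uniform bound in \cite{MR1306022} is obtained differently: one controls the two endpoint values $\mathrm{CSD}_i(\gamma_n(\pm R_{n,i}))$ separately, using that the ends of the neck abut the \emph{fixed} complementary pieces of $(X,g_0)$, on which the pointwise a~priori estimates yield $n$-independent control of the restricted configurations and hence of their CSD values. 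With this correction in place, your pigeonhole and compactness steps go through as written.
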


From Proposition~\ref{CSD} together with \cite{MR1306022} Lemma 9, we obtain a result which is stronger than Proposition~\ref{analysis}:

If the assumption in Proposition~\ref{CSD} is satisfied, then for any $i \in \{ 1, \ldots, l \}$, 
\begin{align*}
\chi^-(\Si_i) \geq | c \cdot \alpha_i |.
\end{align*}

\end{rem}

\begin{bibdiv}
\begin{biblist}
\bib{MR3465838}{article}{
   author={Dai, Bo},
   author={Ho, Chung-I},
   author={Li, Tian-Jun},
   title={Minimal genus for 4-manifolds with $b^+=1$},
   journal={J. Topol.},
   volume={9},
   date={2016},
   number={1},
   pages={5--26},
   issn={1753-8416},
   review={\MR{3465838}},
   doi={10.1112/jtopol/jtv032},
}
\bib{MR2052970}{article}{
   author={Fr{\o}yshov, Kim A.},
   title={An inequality for the $h$-invariant in instanton Floer theory},
   journal={Topology},
   volume={43},
   date={2004},
   number={2},
   pages={407--432},
   issn={0040-9383},
   review={\MR{2052970 (2005c:57043)}},
   doi={10.1016/S0040-9383(03)00049-1},
}
\bib{MR603768}{article}{
   author={Gilmer, Patrick M.},
   title={Configurations of surfaces in $4$-manifolds},
   journal={Trans. Amer. Math. Soc.},
   volume={264},
   date={1981},
   number={2},
   pages={353--380},
   issn={0002-9947},
   review={\MR{603768 (83h:57027)}},
   doi={10.2307/1998544},
}
\bib{MR0339239}{article}{
   author={Hsiang, W. C.},
   author={Szczarba, R. H.},
   title={On embedding surfaces in four-manifolds},
   conference={
      title={Algebraic topology},
      address={Proc. Sympos. Pure Math., Vol. XXII, Univ. Wisconsin,
      Madison, Wis.},
      date={1970},
   },
   book={
      publisher={Amer. Math. Soc., Providence, R.I.},
   },
   date={1971},
   pages={97--103},
   review={\MR{0339239 (49 \#4000)}},
}
\bib{MR1677890}{article}{
   author={Kronheimer, P. B.},
   title={Embedded surfaces and gauge theory in three and four dimensions},
   conference={
      title={Surveys in differential geometry, Vol. III},
      address={Cambridge, MA},
      date={1996},
   },
   book={
      publisher={Int. Press, Boston, MA},
   },
   date={1998},
   pages={243--298},
   review={\MR{1677890 (2000a:57086)}},
}
\bib{MR1664695}{article}{
   author={Kronheimer, P. B.},
   title={Minimal genus in $S^1\times M^3$},
   journal={Invent. Math.},
   volume={135},
   date={1999},
   number={1},
   pages={45--61},
   issn={0020-9910},
   review={\MR{1664695 (2000c:57071)}},
   doi={10.1007/s002220050279},
}
\bib{MR1306022}{article}{
   author={Kronheimer, P. B.},
   author={Mrowka, T. S.},
   title={The genus of embedded surfaces in the projective plane},
   journal={Math. Res. Lett.},
   volume={1},
   date={1994},
   number={6},
   pages={797--808},
   issn={1073-2780},
   review={\MR{1306022 (96a:57073)}},
   doi={10.4310/MRL.1994.v1.n6.a14},
}
\bib{MR1492131}{article}{
   author={Kronheimer, P. B.},
   author={Mrowka, T. S.},
   title={Scalar curvature and the Thurston norm},
   journal={Math. Res. Lett.},
   volume={4},
   date={1997},
   number={6},
   pages={931--937},
   issn={1073-2780},
   review={\MR{1492131 (98m:57039)}},
   doi={10.4310/MRL.1997.v4.n6.a12},
}
\bib{MR2299739}{article}{
   author={Kronheimer, P.},
   author={Mrowka, T.},
   author={Ozsv{\'a}th, P.},
   author={Szab{\'o}, Z.},
   title={Monopoles and lens space surgeries},
   journal={Ann. of Math. (2)},
   volume={165},
   date={2007},
   number={2},
   pages={457--546},
   issn={0003-486X},
   review={\MR{2299739 (2008b:57037)}},
   doi={10.4007/annals.2007.165.457},
}
\bib{MR0133134}{article}{
   author={Kervaire, Michel A.},
   author={Milnor, John W.},
   title={On $2$-spheres in $4$-manifolds},
   journal={Proc. Nat. Acad. Sci. U.S.A.},
   volume={47},
   date={1961},
   pages={1651--1657},
   issn={0027-8424},
   review={\MR{0133134 (24 \#A2968)}},
}
\bib{MR1486407}{article}{
   author={Lawson, Terry},
   title={The minimal genus problem},
   journal={Exposition. Math.},
   volume={15},
   date={1997},
   number={5},
   pages={385--431},
   issn={0723-0869},
   review={\MR{1486407 (98g:57055)}},
}
\bib{MR1868921}{article}{
   author={Li, Tian-Jun},
   author={Liu, Ai-Ko},
   title={Family Seiberg-Witten invariants and wall crossing formulas},
   journal={Comm. Anal. Geom.},
   volume={9},
   date={2001},
   number={4},
   pages={777--823},
   issn={1019-8385},
   review={\MR{1868921 (2002k:57074)}},
}
\bib{MR1438191}{article}{
   author={Morgan, John W.},
   author={Szab{\'o}, Zolt{\'a}n},
   author={Taubes, Clifford Henry},
   title={A product formula for the Seiberg-Witten invariants and the
   generalized Thom conjecture},
   journal={J. Differential Geom.},
   volume={44},
   date={1996},
   number={4},
   pages={706--788},
   issn={0022-040X},
   review={\MR{1438191 (97m:57052)}},
}
\bib{MR2015245}{article}{
   author={Nakamura, Nobuhiro},
   title={The Seiberg-Witten equations for families and diffeomorphisms of
   4-manifolds},
   journal={Asian J. Math.},
   volume={7},
   date={2003},
   number={1},
   pages={133--138},
   issn={1093-6106},
   review={\MR{2015245 (2004h:57037)}},
}
\bib{MR2176601}{article}{
   author={Nakamura, Nobuhiro},
   title={Correction to: ``The Seiberg-Witten equations for families and
   diffeomorphisms of 4-manifolds'' [Asian J. Math. {\bf 7} (2003), no. 1,
   133--138; MR2015245]},
   journal={Asian J. Math.},
   volume={9},
   date={2005},
   number={2},
   pages={185},
   issn={1093-6106},
   review={\MR{2176601 (2006e:57040)}},
   doi={10.4310/AJM.2005.v9.n2.a3},
}
\bib{MR3159966}{article}{
   author={Nouh, Mohamed Ait},
   title={The minimal genus problem in $\Bbb{C}\Bbb{P}^2\#\Bbb{C}\Bbb{P}^2$},
   journal={Algebr. Geom. Topol.},
   volume={14},
   date={2014},
   number={2},
   pages={671--686},
   issn={1472-2747},
   review={\MR{3159966}},
   doi={10.2140/agt.2014.14.671},
}
\bib{MR1745017}{article}{
   author={Ozsv{\'a}th, Peter},
   author={Szab{\'o}, Zolt{\'a}n},
   title={The symplectic Thom conjecture},
   journal={Ann. of Math. (2)},
   volume={151},
   date={2000},
   number={1},
   pages={93--124},
   issn={0003-486X},
   review={\MR{1745017 (2001a:57049)}},
   doi={10.2307/121113},
}
\bib{MR0298684}{article}{
   author={Rohlin, V. A.},
   title={Two-dimensional submanifolds of four-dimensional manifolds},
   language={Russian},
   journal={Funkcional. Anal. i Prilo\v zen.},
   volume={5},
   date={1971},
   number={1},
   pages={48--60},
   issn={0374-1990},
   review={\MR{0298684 (45 \#7733)}},
}
\bib{MR1635698}{article}{
   author={Ruan, Yongbin},
   title={Virtual neighborhoods and the monopole equations},
   conference={
      title={Topics in symplectic $4$-manifolds},
      address={Irvine, CA},
      date={1996},
   },
   book={
      series={First Int. Press Lect. Ser., I},
      publisher={Int. Press, Cambridge, MA},
   },
   date={1998},
   pages={101--116},
   review={\MR{1635698 (2000e:57054)}},
}
\bib{MR1392668}{article}{
   author={Ruberman, Daniel},
   title={The minimal genus of an embedded surface of non-negative square in
   a rational surface},
   journal={Turkish J. Math.},
   volume={20},
   date={1996},
   number={1},
   pages={129--133},
   issn={1300-0098},
   review={\MR{1392668 (97k:57036)}},
}
\bib{MR1671187}{article}{
   author={Ruberman, Daniel},
   title={An obstruction to smooth isotopy in dimension $4$},
   journal={Math. Res. Lett.},
   volume={5},
   date={1998},
   number={6},
   pages={743--758},
   issn={1073-2780},
   review={\MR{1671187 (2000c:57061)}},
   doi={10.4310/MRL.1998.v5.n6.a5},
}
\bib{MR1734421}{article}{
   author={Ruberman, Daniel},
   title={A polynomial invariant of diffeomorphisms of 4-manifolds},
   conference={
      title={Proceedings of the Kirbyfest},
      address={Berkeley, CA},
      date={1998},
   },
   book={
      series={Geom. Topol. Monogr.},
      volume={2},
      publisher={Geom. Topol. Publ., Coventry},
   },
   date={1999},
   pages={473--488 (electronic)},
   review={\MR{1734421 (2001b:57073)}},
   doi={10.2140/gtm.1999.2.473},
}
\bib{MR1874146}{article}{
   author={Ruberman, Daniel},
   title={Positive scalar curvature, diffeomorphisms and the Seiberg-Witten
   invariants},
   journal={Geom. Topol.},
   volume={5},
   date={2001},
   pages={895--924 (electronic)},
   issn={1465-3060},
   review={\MR{1874146 (2002k:57076)}},
   doi={10.2140/gt.2001.5.895},
}

\bib{MR2064429}{article}{
   author={Strle, Sa{\v{s}}o},
   title={Bounds on genus and geometric intersections from cylindrical end
   moduli spaces},
   journal={J. Differential Geom.},
   volume={65},
   date={2003},
   number={3},
   pages={469--511},
   issn={0022-040X},
   review={\MR{2064429 (2005c:57042)}},
}
\end{biblist}
\end{bibdiv}

\end{document}